\newtheorem{assumption}{Assumption}
\newtheorem{corollary}{Corollary}
\newtheorem{lemma}{Lemma}
\newtheorem{remark}{Remark}
\newtheorem*{remark*}{Remark}
\newtheorem{theorem}{Theorem}
\let\b=\boldsymbol
\let\mr=\mathrm
\begin{document}

\begin{frontmatter}



\title{Pressure-robust  optimally convergent $H(\mr{div})$ finite element method without the commuting diagram property for the steady Oseen equations\tnoteref{funding} }

\tnotetext[funding]{
It is supported by National Natural Science Foundation of China (11771257) 
}

\author[label1] {Jin Zhang\corref{cor1}}
\author[label2] {Xiaowei Liu \fnref{cor2}}
\cortext[cor1] {Corresponding author: jinzhangalex@hotmail.com }
\fntext[cor2] {Email: xwliuvivi@hotmail.com }
\address[label1]{School of Mathematics and Statistics, Shandong Normal University,
Jinan 250014, China}
\address[label2]{School of Mathematics and Statistics, Qilu University of Technology (Shandong Academy of Sciences), Jinan 250353, China}


\begin{abstract}
This work develops a convergence theory for $H(\mr{div})$-conforming finite element methods applied to the steady Oseen problem, focusing on cases where the exact finite element complex holds while the commuting diagram property may fail. 
The proposed method incorporates vorticity stabilization to ensure optimal-order convergence of the velocity error, especially for convection-dominated cases.
As a crucial component of the analysis, exact de Rham and finite element complexes provide a framework whose utility includes establishing velocity error estimates independent of the discrete inf-sup constant.  As a representative example, Stenberg finite elements demonstrate the framework's  validity and offer  several computational advantages: pressure robustness, fewer degrees of freedom  than classical RT or BDM elements due to vertex continuity, and convergence   without requiring   the commuting diagram property.  Moreover, the proposed methodology is applicable to a class of finite element pairs that violate the commuting diagram property, thereby offering new possibilities for efficient discretizations of incompressible fluid problems, particularly in high Reynolds number regimes. 
\end{abstract}

\begin{keyword}
Oseen equations\sep H(div)-conforming finite elements \sep  Vorticity stabilization\sep  Pressure-robustness\sep  Stenberg elements.
\end{keyword}

\end{frontmatter}


%
%
%

 \section{Introduction}  

Classical inf-sup stable mixed finite element methods for incompressible flow problems typically relax the divergence constraint, leading to velocity errors that depend on pressure approximation. This results in a loss of pressure robustness \cite{MR3683678}, meaning the numerical scheme fails to preserve key invariance properties of the continuous problem. Such non-pressure-robust behavior can introduce significant inaccuracies, as discussed in  \cite{MR3683678} and \cite{MR3481034}. 
The development and analysis of pressure-robust schemes has been an active field of research, which encompasses the following directions: (1) constructing divergence-free finite element pairs, e.g., the Scott–Vogelius element \cite{MR813691,MR818790} on barycentric-refined meshes and the Guzmán–Neilan element \cite{MR3120580,MR3269433};  
(2) modifying the numerical scheme, such as Linke's approach \cite{MR3133522}; and  
(3) employing postprocessing strategies, as discussed in \cite[Remark 4.102]{MR3561143}.  
For the first popular strategy, \( H(\text{div}) \)-conforming methods have emerged as particularly attractive, since they enable the design of divergence-free finite element pairs on general meshes with greater flexibility than \( H^1 \)-conforming methods.  
For example, whereas \( H^1 \)-based Scott-Vogelius elements necessitate either specific mesh geometries or high polynomial degrees  on general triangulations \cite{MR813691,MR2772092,MR3882274}, \( H(\text{div}) \)-conforming methods provide enhanced flexibility and robustness.

Traditional \( H(\text{div}) \)-conforming finite elements, such as Raviart-Thomas (RT) elements \cite{MR483555,MR431752} and Brezzi-Douglas-Marini (BDM) elements \cite{MR859922,MR890035}, are widely used for incompressible flow simulations \cite{MR2860674,MR3712173,MR3780790,MR4410752,MR4200737,MR4098216} due to their satisfaction of the commuting diagram property and optimal approximation error estimates.  In contrast, elements that fail to satisfy the commuting diagram property often exhibit velocity errors inversely proportional to the discrete inf-sup constant—unlike those that satisfy the property. This dependency becomes critical when the inf-sup constant degenerates (e.g., under anisotropic mesh refinement), as it leads to unbounded error growth and loss of numerical stability.
Certain modern \( H(\text{div}) \)-conforming elements, such as Stenberg elements \cite{MR2594344} , while not satisfying the commuting diagram property, offer notable advantages: inf-sup stability, exactly divergence-free solutions, and a substantial reduction in degrees of freedom relative to RT or BDM elements, achieved through their vertex continuity constraints. Nevertheless, elements of Stenberg type have seen limited adoption in incompressible flow simulations, where robust control of velocity errors is essential, despite these advantageous features.
  The present study seeks to address this gap by establishing a convergence theory for \( H(\text{div}) \)-conforming methods in the context of the Oseen problem, with particular emphasis on scenarios where the exact finite element complex holds but the commuting diagram property fails.

A key challenge in \( H(\text{div}) \)-conforming methods is the lack of \( H^1 \)-conformity, necessitating discontinuous Galerkin (DG) techniques for the discretization of diffusive and convective terms.   Within the DG framework, upwinding is commonly employed to stabilize convection-dominated flows \cite{MR2882148}. For upwind-based  RT and BDM elements of order \( k \), the orthogonal properties of RT elements enable proof of \( k + 1/2 \)-order convergence  for the velocity field in the \( L^2 \)-norm \cite{MR4098216,MR4200737,MR4410752}. Analogous results have also been established for   $H^1$-conforming methods including Scott-Vogelius (SV) elements \cite{MR4331937}. Unfortunately, extending these convergence results to \( H(\text{div}) \)-conforming methods that violate the commuting diagram property, such as Stenberg elements, remains an open problem. 
To address this, we propose a stabilization approach that combines vorticity stabilization and upwinding, guaranteeing both numerical and theoretical 
\( k + 1/2 \)-order convergence while preserving pressure robustness. This stabilization strategy draws inspiration from existing DG and convection stabilization techniques but is specifically adapted to \( H(\text{div}) \)-conforming settings.

 This work focuses on the Oseen problems, encompassing but not limiting to convection-dominated cases. An \( H(\text{div}) \)-conforming finite element method, stabilized with upwinding and vorticity stabilization, is developed to solve the problems. The analysis establishes pressure-robust \( L^2 \)-norm convergence of order \( k + 1/2 \) for the velocity approximation, leveraging exact de Rham complexes and finite element complexes even when the commuting diagram property is violated. Beyond its application to Stenberg elements, the proposed methodology offers a general framework for analyzing a broader class of finite element pairs that lack the commuting diagram property, thereby expanding the range of efficient discretizations for fluid problems. Additionally, this work extends the concept of vorticity stabilization from \( H^1 \)-conforming methods to \( H(\text{div}) \)-conforming ones, opening new avenues for robust and accurate simulations of incompressible flows, particularly in high Reynolds number regimes.

We adopt standard notation for Sobolev and Lebesgue spaces. Given a domain \( D \subseteq \mathbb{R}^d \) (\( d = 2, 3 \)) and an exponent \( q \in [1, +\infty] \), the space \( L^q(D) \) consists of measurable functions with finite \( q \)-norm:  For \( q < +\infty \), \( L^q(D) \) contains functions whose \( q \)-th power is integrable over \( D \).  
 For \( q = +\infty \), it consists of essentially bounded functions.  The corresponding norm is denoted by \( \| \cdot \|_{0,q,D} \), except when \( q = 2 \), where we simplify the notation to \( \| \cdot \|_{0,D} \). In addition, the inner product in $L^2(D)$ will be denoted by $(\cdot , \cdot )_D$. The subspace \( L^q_0(D) \) includes functions in \( L^q(D) \) with zero mean.  For \( p \in [1, +\infty] \) and \( m \geq 0 \), the Sobolev space \( W^{m,p}(D) \) comprises \( L^p(D) \)-functions with weak derivatives up to order \( m \) also in \( L^p(D) \). The associated norm and seminorm are \( \| \cdot \|_{m,p,D} \) and \( | \cdot |_{m,p,D} \), respectively. When \( p = 2 \), we write \( H^m(D) = W^{m,2}(D) \), equipped with the norm \( \| \cdot \|_{m,D} \). The closure of \( C_0^\infty(D) \) in this norm is \( H_0^m(D) \).  
For brevity, we omit the domain subscript \( D \) in norms 
and inner products  when \( D = \Omega \).    Additionally, let $\mathbb{P}_k(K)$ denote the space of scalar-valued polynomials of degree at most $k$ on the element $K$, and $\mathbb{P}_k(K)^d$ its vector-valued counterpart in $\mathbb{R}^d$.

\section{Oseen equations and finite element methods}
Let $\Omega  \subset  \mathbb{R}^d$ ($d\in \{ 2,3 \} $) be a bounded, polyhedral, contractible domain  with  Lipschitz
continuous boundary $\partial\Omega$.  We consider the following
Oseen equations, for simplicity of presentation
with homogeneous Dirichlet data: find the velocity field $\b{u}$ and the pressure $p$ such that
\begin{equation}\label{eq:Oseen-equations}
\begin{aligned}
\mathcal{L}\b{u}+\nabla p=&\b{f}\quad \text{in $\Omega$},\\
\nabla \cdot \b{u}=&0\quad \text{in $\Omega$},\\
 \b{u}=&0\quad \text{on $\partial\Omega$},
\end{aligned}
\end{equation}
where $\mathcal{L}\b{u}:=-\nu \Delta \b{u}+(\b{b}\cdot \nabla )\b{u}+c \b{u}$, $\nu>0$ is the kinematic viscosity, $\b{b}$ is the advection velocity, $c$ is a given scalar function and $\b{f}$ is the external body force. 
The Oseen-type problem arises as a fundamental auxiliary formulation in numerous numerical schemes for solving the Navier–Stokes equations (see \cite[Remark 5.1]{MR3561143}).  In computational practice, this linearized problem appears in two primary contexts:  For steady-state simulations, the Picard iteration method generates a sequence of Oseen problems where the reaction coefficient vanishes $c \equiv 0$ and the convection term employs the velocity field from the preceding iteration as the advective velocity $\mathbf{b}$.  In time-dependent simulations utilizing semi-implicit temporal discretization, each time step requires solving an Oseen problem where  $\mathbf{u}$ represents the unknown velocity at the new time level, $\mathbf{b}$ corresponds to the explicitly treated advective velocity (typically extrapolated from previous time steps), and the reaction coefficient $c = 1/\Delta t$ emerges naturally from the time-stepping scheme. 
Focusing primarily (though not exclusively) on the high-Reynolds-number regime,   and motivated by  \cite[Remark 5.3]{MR3561143}, we impose the following condition:  
\begin{equation}\label{eq:condition-on-b+c}  
c(\mathbf{x}) - \frac{1}{2} \nabla \cdot \mathbf{b}(\mathbf{x})
\ge r_0
 > 0, \quad \mathbf{x} \in \Omega, 
\end{equation}  
where $r_0$  is a strictly positive constant.  

 Let $V:= H^1_0(\Omega)^d$
and $Q:=L^2_0(\Omega)$. Then the weak form of \eqref{eq:Oseen-equations} reads as follows: Find $ \b{u}\in V$ and $p\in Q$ such that
\begin{equation}\label{eq:Oseen-equations-weak-form}
\begin{aligned}
\nu \mathcal{D}(\b{u},\b{v})+\mathcal{C}(\b{u},\b{v})+\mathcal{R}(\b{u},\b{v})+\mathcal{P}(\b{v},p)
 &= \langle \mathbf{f}, \mathbf{v} \rangle_{V' \times V} \quad &&\forall \mathbf{v} \in V, \\
\mathcal{P}(\b{u},q) &= 0 \quad &&\forall q \in Q,
\end{aligned}
\end{equation}
where
\begin{equation*}
\begin{aligned}
\mathcal{D}(\b{u},\b{v})=&(\nabla \mathbf{u}, \nabla \mathbf{v}) ,\quad  &&\mathcal{C}(\b{u},\b{v})=( (\mathbf{b} \cdot \nabla) \mathbf{u},\mathbf{v}),\\
\mathcal{R}(\b{u},\b{v})=& (c\mathbf{u}, \mathbf{v} \big),\quad &&\mathcal{P}(\b{v},p)=- (\nabla \cdot \mathbf{v}, p).
\end{aligned}
\end{equation*}
For vector fields we have \((\mathbf{u}, \mathbf{v}) = \int_\Omega \mathbf{u} \cdot \mathbf{v} \,d\mathbf{x}\), while for matrix-valued functions \((\nabla \mathbf{u}, \nabla \mathbf{v}) = \int_\Omega \nabla \mathbf{u} : \nabla \mathbf{v} \,d\mathbf{x}\) denotes the Frobenius inner product. 
The term \(\langle \mathbf{f}, \mathbf{v} \rangle_{V' \times V}\) denotes the duality pairing between the dual space \(V'\) and \(V\) for \(\mathbf{f} \in V'\) and \(\mathbf{v} \in V\).  Under  condition  \eqref{eq:condition-on-b+c} along with appropriate regularity assumptions on $\b{b}$ and $c$,    the existence and uniqueness of a solution $(\mathbf{u}, p) \in H^1_0(\Omega)^d \times L^2_0(\Omega)$ of \eqref{eq:Oseen-equations-weak-form} is guaranteed, as proved in  \cite{Bof1Bre2For3:2013-Mixed,MR851383,MR3561143}.

Next, we introduce the weakly  divergence-free space
\begin{align*} 
V_{\mr{div}}:= \{ \b{v}  \in  V:\  (\nabla \cdot \b{v},q)=0\quad \forall q\in Q \}.
\end{align*}
 This space consists of \( H^1 \)-vector fields with zero divergence in the weak sense and homogeneous Dirichlet boundary conditions.  Our theoretical framework  is based on potential functions of weakly divergence-free velocity fields and their corresponding divergence-free finite element approximations.  This leads us to consider the following natural function spaces that characterize the kernel of the divergence operator:
\begin{equation*} 
\begin{aligned}
Z:=&\left\{  z\in H^1(\Omega):\;\mr{curl}z\in H^1_0(\Omega)^2\right\},\quad \text{if $\Omega\subset \mathbb{R}^2$},\\
Z:=&\left\{  \b{z}\in H^1(\Omega)^3:\;\mr{curl}\b{z}\in H^1_0(\Omega)^3 \right\},\quad \text{if $\Omega\subset \mathbb{R}^3$}.
\end{aligned}
\end{equation*}
This space consists of functions in \( H^1(\Omega) \) (or \( H^1(\Omega)^3 \) in 3D) whose curl lies in the subspace \( H^1_0(\Omega) \). 
The connection between these spaces and divergence-free fields is made precise by the following fundamental result.
\begin{theorem}\label{the:potential-of-u} 
Let $\Omega\subset \mathbb{R}^d$  ($d\in \{ 2,3 \} $) be a bounded, Lipschitz  domain. For any $\b{u}\in H^r(\Omega)^d$ with $r\ge 1$ satisfying $\mr{div}\b{u}=0$, there exists a potential $\b{z}$ (a scalar when $d=2$, or a vector when $d=3$)  with components in $H^{r+1}(\Omega)$ such that
\begin{equation}\label{eq:u-to-z}
\mr{curl} \b{z} = \b{u} \quad \text{in } \Omega.
\end{equation}
Moreover, the following stability estimate holds
\begin{equation*}
\|\b{z}\|_{r+1} \leq C\|\b{u}\|_{r}
\end{equation*}
where C > 0 is independent of $\b{u}$.
\end{theorem}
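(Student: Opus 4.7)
The plan is to reduce the problem from the Lipschitz domain $\Omega$ to a smooth contractible one on which exact de Rham complexes and full elliptic regularity are available. Concretely, I would enclose $\bar\Omega$ in an open ball $B$, produce a divergence-free extension $\mathbf{v}\in H^r(B)^d$ of $\mathbf{u}$, construct a potential $\mathbf{z}$ of $\mathbf{v}$ on $B$ that gains one Sobolev order, and then restrict $\mathbf{z}$ back to $\Omega$.

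\textbf{Step 1 (divergence-free extension).} I would first use a Stein-type Sobolev extension for Lipschitz domains to obtain $\tilde{\mathbf{u}}\in H^r(B)^d$ with $\tilde{\mathbf{u}}=\mathbf{u}$ in $\Omega$ and $\|\tilde{\mathbf{u}}\|_{r,B}\le C\|\mathbf{u}\|_{r,\Omega}$, then multiply by a cutoff so that $\tilde{\mathbf{u}}$ vanishes near $\partial B$. The defect $g:=\mathrm{div}\,\tilde{\mathbf{u}}$ then lies in $H^{r-1}(B)$, is supported in $\bar B\setminus\Omega$, and has vanishing mean on $B$. A Bogovski\v\i-type right inverse of the divergence on the Lipschitz annular region $B\setminus\bar\Omega$ produces $\mathbf{w}\in H^r(B)^d$ with $\mathrm{div}\,\mathbf{w}=g$, support in $\bar B\setminus\Omega$, and $\|\mathbf{w}\|_{r,B}\le C\|g\|_{r-1,B}$. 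Setting $\mathbf{v}:=\tilde{\mathbf{u}}-\mathbf{w}$ gives a divergence-free $\mathbf{v}\in H^r(B)^d$ coinciding with $\mathbf{u}$ in $\Omega$, with $\|\mathbf{v}\|_{r,B}\le C\|\mathbf{u}\|_{r,\Omega}$.

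\textbf{Step 2 (potential on $B$).} Since the ball $B$ is smooth and contractible, the de Rham complex is exact. In 2D I would define the stream function by the path integral
\[
z(\mathbf{x}):=\int_{\mathbf{x}_0}^{\mathbf{x}}\bigl(v_1\,dy_2-v_2\,dy_1\bigr),
\]
which is path-independent because $\mathrm{div}\,\mathbf{v}=0$; it satisfies $\mathrm{curl}\,z=\mathbf{v}$ and belongs to $H^{r+1}(B)$ since $\nabla z=(-v_2,v_1)\in H^r(B)^2$. In 3D I would solve the elliptic system $-\Delta\mathbf{z}=\mathrm{curl}\,\mathbf{v}$ in $B$ under the gauge $\mathrm{div}\,\mathbf{z}=0$ and boundary condition $\mathbf{z}\times\mathbf{n}=0$ on $\partial B$; full elliptic regularity on the smooth domain $B$ gives $\mathbf{z}\in H^{r+1}(B)^3$ with $\|\mathbf{z}\|_{r+1,B}\le C\|\mathbf{v}\|_{r,B}$, and the identity $\mathrm{curl}\,\mathbf{z}=\mathbf{v}$ follows from $-\Delta=\mathrm{curl}\,\mathrm{curl}-\nabla\mathrm{div}$ together with the gauge and the divergence-freeness of $\mathbf{v}$.

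\textbf{Step 3 and main obstacle.} Restricting $\mathbf{z}$ from $B$ to $\Omega$ preserves both $H^{r+1}$ regularity and the identity $\mathrm{curl}\,\mathbf{z}=\mathbf{u}$, and chaining the bounds yields $\|\mathbf{z}\|_{r+1,\Omega}\le C\|\mathbf{u}\|_{r,\Omega}$. The main technical obstacle is in Step~1: the Bogovski\v\i\ operator is most often stated with an $L^2\to H^1$ gain, so to obtain the full $H^{r-1}\to H^r$ shift for all integer $r\ge 1$ on the Lipschitz annulus $B\setminus\bar\Omega$ one must either decompose the annulus into finitely many star-shaped pieces and invoke Galdi-type higher regularity for Bogovski\v\i, or replace this step by subtracting $\nabla\varphi$ with $\varphi$ solving a suitable elliptic Neumann problem that absorbs the defect; either route is standard but requires some care with the Lipschitz geometry of $\partial\Omega$.
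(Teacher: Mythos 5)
Your construction is correct in outline but takes a genuinely different route from the paper. The paper disposes of the theorem in one line by citing the regularized Bogovskii/Poincar\'e integral operators of Costabel and McIntosh (their Theorem 4.9(b)), which directly furnish a potential operator gaining one Sobolev order on Lipschitz domains with vanishing cohomology. You instead rebuild the result by hand: divergence-free extension to a ball $B$, then an explicit potential on $B$, then restriction. Your route is more self-contained and makes the mechanism transparent (on the ball the one-order gain is immediate in 2D, where $\nabla z$ is a permutation of the components of $\mathbf{v}$, and comes from elliptic regularity in 3D); the paper's route is shorter and delegates to the cited reference exactly the technical points you flag in Step 3 (higher-order Bogovskii estimates on Lipschitz geometry).

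There is, however, one genuine gap, and it sits at the only place where the topology of $\Omega$ can enter your argument. In Step 1 you verify that $g=\mathrm{div}\,\tilde{\mathbf{u}}$ has vanishing mean over $B$, but a Bogovskii right inverse on $A:=B\setminus\bar\Omega$ producing a field supported in $\bar A$ requires vanishing mean of $g$ over \emph{each connected component} of $A$. For a general bounded Lipschitz $\Omega$ this fails: take $\Omega$ a planar annulus and $\mathbf{u}=\mathbf{x}/|\mathbf{x}|^2$; the component of $A$ filling the hole carries $\int g=\int \mathbf{u}\cdot\mathbf{n}\,\mathrm{d}s=2\pi\neq 0$, no admissible correction $\mathbf{w}$ exists, and indeed this divergence-free $\mathbf{u}$ is not a curl --- so the statement as literally written (Lipschitz, no topological hypothesis) is false and your proof must break exactly here. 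The repair is the paper's standing assumption that $\Omega$ is contractible (this is what its proof means by ``vanishing cohomology''): then $B\setminus\bar\Omega$ is connected, the global zero mean coincides with the componentwise one, and your argument goes through. You should state this hypothesis and check the componentwise mean condition explicitly. The remaining items you already flag (higher-order Bogovskii on the Lipschitz annulus, well-posedness of the gauged vector Laplacian on $B$, and the fact that $\mathbf{v}\cdot\mathbf{n}=0$ on $\partial B$ is what forces $\mathrm{curl}\,\mathbf{z}-\mathbf{v}=\nabla\phi$ to vanish) are standard and recoverable.
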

\begin{proof}
The existence of such $\b{z}$ follows from the generalized Bogovskii operator theory \cite[Theorem 4.9(b)]{MR2609313}, utilizing the vanishing cohomology of contractible domains.
\end{proof}

 \begin{remark}
Since $\nabla \cdot V\subset Q$,  the weakly divergence-free condition implies pointwise divergence-free almost everywhere in $\Omega$. Thus, we may equivalentlly characterize $V_{\mr{div}}$ as
\begin{align*} 
V_{\mr{div}}:= \{ \b{v}  \in  V:\; \nabla \cdot \b{v}=0\quad \forall \b{x}\in \Omega \}.
\end{align*}
Theorem \ref{the:potential-of-u} yields the fundamental relationship between the potential space and divergence-free space:
\begin{equation}\label{eq:Z-V-div}
\mathrm{curl}Z = V_{\mathrm{div}}.
\end{equation}
\end{remark}

\subsection{Finite element spaces and de Rham complexes}
Let $\{ \mathcal{T}_h \}_{h>0}$ be a family of shape-regular meshes of $\Omega$ consisting of  triangles ($d=2$) or tetrahedra ($d=3$). For each element $K \in \mathcal{T}_h$, we denote its diameter by $h_K$ and define the mesh size $h := \max_{K \in \mathcal{T}_h} h_K$.   The set of all facets in $\mathcal{T}_h$ is denoted by $\mathcal{F}_h$, which is partitioned into:  the subset $\mathcal{F}^i_h$ of interior facets  and the subset $\mathcal{F}^{\partial}_h$ of boundary facets $F\subset \partial\Omega$.  
For each facet $F \in \mathcal{F}_h$, we fix a unit normal vector $\mathbf{n}_F$, chosen arbitrarily for interior facets and taken as the outward unit normal $\mathbf{n}$ for boundary facets.  Let $\varphi$ be a piecewise continuous (scalar-, vector- or tensor-valued) function. For any  facet $F\in\mathcal{F}_h$ and $\b{x}\in F$,  we define: 
$$
[\varphi]_F(\b{x})\overset{\mr{def}}{=}
\left\{
\begin{aligned}
&\lim\limits_{t\to 0+0} (\varphi(\b{x}-t \b{n}_F)-\varphi(\b{x}+t \b{n}_F))
\quad &&\text{if $F\in \mathcal{F}^i_h$},\\
&\varphi (\b{x})\quad &&\text{if $F\in \mathcal{F}^{\partial}_h$},
\end{aligned}
\right.
$$
and
$$
\{\varphi \}_F(\b{x})\overset{\mr{def}}{=}
\left\{
\begin{aligned}
&\lim\limits_{t\to 0+0}\frac{1}{2} (\varphi(\b{x}-t \b{n}_F)+\varphi(\b{x}+t \b{n}_F))
\quad &&\text{if $F\in \mathcal{F}^i_h$},\\
&\varphi (\b{x})\quad &&\text{if $F\in \mathcal{F}^{\partial}_h$}.
\end{aligned}
\right.
$$
The operators act componentwise
for vector- and tensor-valued functions.  The facet subscript $F$ is often omitted when clear from context.  Let $\langle \cdot, \cdot \rangle_F$ denote the $L^2$-inner product on facet $F$, defined by
$$
\langle v, w \rangle_F := \int_F v w \, \mathrm{d}s, \quad \forall v,w \in L^2(F).
$$
Additionally, we define the following broken inner products:
\begin{equation}\label{eq:broken-inner-products}
(v,w)_h:=\sum_{K\in\mathcal{T}_h}(v,w)_K,
\quad
\langle v,w\rangle_{\mathcal{F}^i_h}:=\sum_{F\in\mathcal{F}^i_h} \langle v,w\rangle_F,\quad
\langle v,w\rangle_{\mathcal{F}_h}:=\sum_{F\in\mathcal{F}_h} \langle v,w\rangle_F
\end{equation}
with associated norms $\Vert \cdot \Vert_h$, $\Vert \cdot \Vert_{h,\mathcal{F}^i_h}$, $\Vert \cdot \Vert_{h,\mathcal{F}_h}$, respectively.

The finite element spaces employed in this work  are deeply connected to two classical de Rham complexes, which serve as a cornerstone for both theoretical analysis and the development of stable numerical methods for partial differential equations \cite{MR2594630,MR2269741,MR3908678}. For a two-dimensional domain $\Omega\subset \mathbb{R}^2$, the first complex \cite[\S 1.3 ]{MR851383} is given by
\begin{equation}\label{eq:deRham-L2-2D}
0 \hookrightarrow  H^1_0(\Omega) \xlongrightarrow{ \b{\mr{curl}} } 
\b{H}_0(\mr{div},\Omega) \xlongrightarrow{ \mr{div} }L^2_0(\Omega) \to 0,
\end{equation}
while for a three-dimensional domain $\Omega\subset \mathbb{R}^3$,  the second complex \cite[\S 4.2]{MR3802677} takes the form
\begin{equation}\label{eq:deRham-L2-3D}
0 \hookrightarrow   H^1_0(\Omega) \xlongrightarrow{ \b{\mr{grad}} }  
\b{H}_0(\mr{curl},\Omega)
\xlongrightarrow{ \b{\mr{curl}} } 
\b{H}_0(\mr{div},\Omega) \xlongrightarrow{ \mr{div} }L^2_0(\Omega) \to 0.
\end{equation}
Here, the function spaces are defined as
\begin{align*}
\mathbf{H}(\mathrm{div},\Omega) &:= \{\mathbf{v} \in [L^2(\Omega)]^d:\;\;  \mathrm{div}
\,\mathbf{v}\in L^2(\Omega) \}, \\
\mathbf{H}_0(\mathrm{div},\Omega) &:= \{\mathbf{v} \in \mathbf{H}(\mathrm{div},\Omega):\;\;\mathbf{v}|_{\partial\Omega}\cdot \b{n}=0 \}, \\
\mathbf{H}(\mathrm{curl},\Omega) &:= \{\mathbf{v} \in [L^2(\Omega)]^3: \;\; \mathbf{curl}\mathbf{v}\in [L^2(\Omega)]^3\},\\
\mathbf{H}_0(\mathrm{curl},\Omega) &:= \{\mathbf{v} \in \mathbf{H}(\mathrm{curl},\Omega) : \;\;  \mathbf{v}|_{\partial\Omega}\times \b{n}=\b{0}\},
\end{align*}
where $d = 2$ or 3 denotes the spatial dimension and $\b{n}$ is the unit normal vector of $\partial\Omega$.
 A sequence being a complex means that the composition of any two consecutive mappings vanishes. The complex is exact when the range of each map coincides precisely with the kernel of the subsequent one. By virtue of the domain $\Omega$ being contractible, the de Rham complexes \eqref{eq:deRham-L2-2D} (for $d=2$) and \eqref{eq:deRham-L2-3D} (for $d=3$) are both exact, as established in \cite{MR851383,MR2269741}.

We begin by defining the finite element space for velocity:
\begin{equation*} 
\begin{aligned}
V_h=\{&\b{v}_h\in \b{H}(\mr{div},\Omega): \;\b{v}_h|_K\in \b{V}_k(K),\;\forall K\in\mathcal{T}_h;\;  \; \b{v}_h\cdot \b{n}|_{\partial\Omega}=0 \}.
\end{aligned}
\end{equation*}
Here, the local space $\b{V}_k(K)$ is a set of vector-valued piecewise polynomials satisfying $ \mathbb{P}_k(K) ^d\subset \b{V}_k(K)$ and $ \mathbb{P}_{k+1}(K)^d\not\subset \b{V}_k(K)$. To maintain the generality of the theoretical framework presented in Section \ref{sec:convergence}, we do not impose further explicit constraints on the definition of $\b{V}_k(K)$.   
The finite element space \( V_h \) satisfies the conforming inclusion \( V_h \subset H_0(\text{div},\Omega) \), while typically failing to satisfy \( V_h \subset H_0^1(\Omega)^d \). This construction encompasses classical elements including RT and BDM elements, as well as more recent variants like Stenberg elements. The finite element space for pressure is denoted by $Q_h$ and satisfies $Q_h \subset L^2_0(\Omega)$.

Existing convergence analyses for incompressible flows employing $H(\mathrm{div})$-conforming elements—particularly RT and BDM elements—universally rely on the commuting diagram property. One important consequence of this property is that the velocity error estimates become independent of the discrete inf-sup constant. However, modern elements like the efficient Stenberg variant lack this property. We therefore develop a convergence theory based on exact finite element complexes, extending the framework to these elements. 
\begin{assumption}\label{assumption:Vh+Qh}
The pair  $V_h/Q_h$  forms  discrete finite element subcomplexes as follows:  
 \begin{equation}\label{eq:FE-subcomplex-2D}
\text{\textbf{2D case}}:\quad0 \hookrightarrow  Z_h
\xlongrightarrow{ \b{\mr{curl}} } 
V_h \xlongrightarrow{ \mr{div} }Q_h \to 0
\end{equation}
and
 \begin{equation}\label{eq:FE-subcomplex-3D}
\text{\textbf{3D case}}:\quad 0 \hookrightarrow  W_h \xlongrightarrow{ \b{\mr{grad}} }  
Z_h
\xlongrightarrow{ \b{\mr{curl}} } 
V_h \xlongrightarrow{ \mr{div} }Q_h \to 0. 
\end{equation}
These discrete complexes  satisfy the following fundamental properties:
\begin{enumerate}[(1)]
\item
They provide conforming discretizations of the continuous complexes \eqref{eq:deRham-L2-2D} and \eqref{eq:deRham-L2-3D}, respectively, with the following inclusions:
     \[
     \begin{aligned}
     Q_h &\subset L^2_0(\Omega),\\
     V_h &\subset \mathbf{H}_0(\mathrm{div};\Omega), 
     \end{aligned}
     \]
     and 
\begin{equation}\label{eq:regularity-Zh-0}
     \begin{cases}
     Z_h \subset H^1_0(\Omega) & \text{for } \Omega\subset\mathbb{R}^2, \\
     Z_h \subset \mathbf{H}_0(\mathrm{curl},\Omega), \ W_h \subset H^1_0(\Omega) & \text{for } \Omega\subset\mathbb{R}^3.
     \end{cases}
\end{equation}
\item
The discrete complexes are exact.
\item
The local approximation space $Z_h|_K$ on each element $K\in\mathcal{T}_h$ contains all  polynomial functions up to degree $\le k+1$:
\begin{equation}\label{eq:order-Zh}
\begin{aligned}
&\mathbb{P}_{k+1}(K) \subset Z_h|_K\quad \text{in the 2D case},\\
&\mathbb{P}_{k+1}(K)^3 \subset Z_h|_K\quad \text{in the 3D case}.
\end{aligned}
\end{equation}
\end{enumerate}

\end{assumption}

\begin{remark}
Based on the compatibility condition \eqref{eq:regularity-Zh-0}, we immediately derive the following results: For all facets  $F\in\mathcal{F}^i_h$, any $\b{z}\in Z$ and $ \b{z}_h\in Z_h$,  
 \begin{equation}\label{eq:regularity-Zh}
 \begin{aligned}
\left[\b{z}-\b{z}_h\right]|_F&=\b{0},\quad
 &&\text{if $\Omega\subset \mathbb{R}^2$},\\
\left[\b{z}-\b{z}_h\right]
\times \b{n}_F|_F&=\b{0},\quad 
&& \text{if $\Omega\subset \mathbb{R}^3$}.
\end{aligned}
\end{equation}
\end{remark}

\begin{remark}\label{rem:exactness-div}
The exactness $\mr{img}(\mr{div})=Q_h $ in  \eqref{eq:FE-subcomplex-2D} and \eqref{eq:FE-subcomplex-3D} implies the following conditions. 
\begin{itemize}
\item
$ \mr{div}V_h\supset Q_h $ guarantees the discrete inf-sup stability of
$V_h/Q_h$.
\item
$ \mr{div}V_h\subset Q_h $   ensures exactly divergence-free velocity approximations \cite{MR3683678}.
\end{itemize}

 The discretely  divergence-free space is defined by
\begin{align*} 
V_{h,\mr{div}}:= \{ \b{v}_h  \in  V_h:\; (\nabla \cdot \b{v}_h,q_h)=0\quad \forall q_h\in Q_h \}.
\end{align*}
The exactness property \(\mathrm{img}(\mathrm{curl}) = \mathrm{ker}(\mathrm{div})\) implies 
\begin{equation}\label{eq:Zh-Vh-div}
\mathrm{curl}\,Z_h = V_{h,\mathrm{div}}.
\end{equation}  
Combining \eqref{eq:Z-V-div} and \eqref{eq:Zh-Vh-div}, we observe that the approximation quality of \(V_{\mathrm{div}}\) by \(V_{h,\mathrm{div}}\) is inherently linked to the approximation of \(Z\) by \(Z_h\),  where the latter spaces exhibit simpler functional structures. This fundamental relationship plays a central role in our convergence analysis.

\end{remark}

The inclusion \eqref{eq:order-Zh} implies that $Z_h$ possesses optimal approximation properties.  Specifically, we have the following error estimate.
\begin{lemma}\label{lem:Z-Zh}
Assume  $\b{z}\in Z\cap H^{k+2}(\Omega)$. For every multi-index $\b{\alpha}=(\alpha_1,\ldots,\alpha_d)\in \mathbb{R}^d$,  the following estimation holds:
\begin{equation*}
\inf\limits_{\b{\psi}_h\in Z_h} \Vert \partial^{\alpha} (\b{z}-\b{\psi}_h) \Vert_h
\le C h^{k+2-|\alpha|} \vert \b{z} \vert_{k+2,\Omega}\quad \text{for $|\alpha|\le k+1$}.
\end{equation*}
\end{lemma}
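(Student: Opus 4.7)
The plan is to exhibit an explicit quasi-interpolant $\Pi_h \b{z} \in Z_h$ satisfying the claimed bound, since controlling the infimum from above by a single admissible approximation is enough.

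First, I would invoke a quasi-interpolation operator $\Pi_h: Z \to Z_h$ adapted to the structure of the subcomplex in Assumption~\ref{assumption:Vh+Qh}. In the 2D case, where $Z_h \subset H^1_0(\Omega)$, a Scott--Zhang type interpolant is appropriate: it is locally $H^1$-stable, preserves homogeneous Dirichlet boundary conditions, and, thanks to the inclusion $\mathbb{P}_{k+1}(K) \subset Z_h|_K$ from \eqref{eq:order-Zh}, reproduces all polynomials of degree $\le k+1$ on each element. In the 3D case, where $Z_h \subset \b{H}_0(\mr{curl},\Omega)$, I would use a smoothed projection of Arnold--Falk--Winther or Christiansen--Winther type, which provides analogous local stability and polynomial preservation at the vectorial level while respecting the de Rham complex structure.

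Second, on each element $K\in\mathcal{T}_h$ I would apply the Bramble--Hilbert lemma on a small patch $\omega_K$ containing $K$. Since $\Pi_h$ reproduces $\mathbb{P}_{k+1}$ on $\omega_K$, we have $\b{z} - \Pi_h\b{z} = (I - \Pi_h)(\b{z}-\b{p})$ for any polynomial $\b{p}$ of degree $\le k+1$. Combining local $L^2$-stability of $\Pi_h$ with inverse estimates on $Z_h|_K \supset \mathbb{P}_{k+1}(K)$ to control the higher-order broken derivatives yields
\begin{equation*}
\|\partial^{\alpha}(\b{z}-\Pi_h\b{z})\|_{0,K} \le C \sum_{|\beta|\le|\alpha|} h_K^{|\beta|-|\alpha|}\|\partial^{\beta}(\b{z}-\b{p})\|_{0,\omega_K},
\end{equation*}
and choosing $\b{p}$ as the averaged Taylor polynomial of degree $k+1$ over $\omega_K$, followed by a standard scaling to a reference patch, delivers the local bound
\begin{equation*}
\|\partial^{\alpha}(\b{z}-\Pi_h\b{z})\|_{0,K} \le C h_K^{k+2-|\alpha|}|\b{z}|_{k+2,\omega_K}\qquad \text{for }|\alpha|\le k+1.
\end{equation*}

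Third, I would square, sum over $K\in\mathcal{T}_h$, and use shape regularity to bound the overlap of the patches $\{\omega_K\}$ by a dimension-dependent constant, obtaining
\begin{equation*}
\|\partial^{\alpha}(\b{z}-\Pi_h\b{z})\|_h^2 \le C h^{2(k+2-|\alpha|)}|\b{z}|_{k+2}^2,
\end{equation*}
from which the lemma follows by taking square roots. The main obstacle is the 3D construction: unlike the Scott--Zhang operator for $H^1$-conforming elements, a quasi-interpolant for an $H(\mr{curl})$-conforming Nédélec-type space that is simultaneously locally stable, polynomial-preserving to degree $k+1$, and compatible with the exact complex is nontrivial to build from scratch. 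However, such operators are standard in finite element exterior calculus, and the exactness of \eqref{eq:FE-subcomplex-3D} together with \eqref{eq:order-Zh} places $Z_h$ squarely in the setting where these constructions are known to apply.
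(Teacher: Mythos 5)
Your argument is correct and follows the standard quasi-interpolation route that the paper itself implicitly relies on: the paper states this lemma without proof, attributing it directly to the inclusion \eqref{eq:order-Zh}, and your Bramble--Hilbert plus local-stability plus finite-overlap argument is exactly the missing justification. The one refinement worth noting is that the 3D construction you flag as the main obstacle is not actually needed under the stated hypotheses: since $\b{z}\in H^{k+2}(\Omega)$ with $k\ge 3$ in 3D (resp.\ $k\ge 2$ in 2D), Sobolev embedding makes all vertex-value and vertex-derivative degrees of freedom of the Hu--Zhang (resp.\ Hermite) space well defined, so the canonical nodal interpolant into $Z_h$ already reproduces $\mathbb{P}_{k+1}(K)$ and yields the local estimate element by element without any smoothed projection.
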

 
 \begin{remark}
 For incompressible flows,  the commuting diagram property has been a standard requirement for $H(\mr{div})$-conforming elements in numerous classical works \cite{MR4098216,MR3780790,MR2860674,MR4410752}. 
  This property theoretically ensures that the velocity field error is independent of the discrete inf-sup stability constant.

Assumption \ref{assumption:Vh+Qh} replaces this classical requirement with the exactness of the discrete complex $(V_h, Q_h)$. Although exactness follows from the commuting diagram property, the reverse implication fails in general. This work thereby broadens the scope of viable discrete spaces while contributing new theoretical insights to mixed finite element analysis.
\end{remark}
 
 \subsection{Examples of finite element paris satisfying Assumption \ref{assumption:Vh+Qh}}
In fact, numerous finite element subcomplexes satisfy  Assumption \ref{assumption:Vh+Qh}; see, for instance,  \cite[Tables 1 \& 2]{MR3802677}, \cite[Example 5.10]{MR4654617} and discussions on the imposition of Dirichlet boundary conditions in  \cite{MR3045658,MR3802677}.
Specifically, we adopt the  \textbf{nonstandard $H(\mr{div})$ elements}   introduced by Stenberg \cite{MR2594344} for velocity approximation.  For the case where $\Omega\subset \mathbb{R}^2$ and $k\ge 2$, we consider the   subcomplex given in \cite[(5) and (12)]{MR3802677} or \cite[Example 4.6]{chen2023finite}:  
\begin{equation*} 
\begin{aligned}
 0 \hookrightarrow  Z_h:= \mr{Hermite}_{k+1} 
\xlongrightarrow{ \b{\mr{curl}} } 
 V_h:=\mr{Stenberg}_{k} \xlongrightarrow{ \mr{div} } 
 Q_h:=\mr{DG}_{k-1}
 \to 0,
\end{aligned}
\end{equation*}
where  
\begin{align*}
 \mr{Hermite}_{k+1}=&\{v\in C^0(\bar{\Omega}):\;v|_K\in \mathbb{P}_{k+1}(K),\;\forall K\in\mathcal{T}_h;\;v\in C^1(\mathcal{V}) ;\; v|_{\partial\Omega}=0\},\\
 \mr{Stenberg}_{k} =& \{\b{v}\in \b{H}_0(\mr{div},\Omega):\;\b{v}|_K\in  \mathbb{P}_k (K)^2,\;\forall K\in\mathcal{T}_h;\; \b{v}\in C^0(\mathcal{V}) \},\\
\mr{DG}_{k-1}=&\{v\in L^2_0(\Omega):\;\b{v}|_K\in \mathbb{P}_{k-1}(K),\;\forall K\in\mathcal{T}_h\}.
 \end{align*}
Here, $\mathcal{V}$ denotes the set of vertices in the triangulation $\mathcal{T}_h$, and $v\in C^r(\mathcal{V})$ represents that $v$ has continuous derivatives up to order $r$ at all vertices. The degrees of freedom (DoFs) can be given as follows:
\begin{itemize}
\item
For $v\in  \mr{Hermite}_{k+1} $:
\begin{itemize}
\item
function value $v(\b{x})$ and first order derivatives $\partial_i v(\b{x})$, $i=1,2$ at each vertex $\b{x}\in\mathcal{V}$,
\item
moments on each edge
$$
\int_F v\; q\mr{d}s,\quad q\in\mathbb{P}_{k+1-4}(F),\;\forall F\in\mathcal{F}_h,
$$
\item
moments on each element
$$
\int_K v\; q\mr{d}\b{x},\quad q\in\mathbb{P}_{k+1-3}(K),\;\forall K\in\mathcal{T}_h.
$$
\end{itemize}
\item
For $\b{v}\in  \mr{Stenberg}_k $:
\begin{itemize}
\item
function value $\b{v}(\b{x})$   at each vertex $\b{x}\in\mathcal{V}$,
\item
moments on each edge
$$
\int_F \b{v}\cdot \b{n}\; q\mr{d}s,\quad q\in\mathbb{P}_{k-2}(F),\;\forall F\in\mathcal{F}_h,
$$
\item
moments on each element
$$
\int_K \b{v}\cdot \b{q}\mr{d}\b{x},\quad \b{q}\in\mathcal{N}_{k-2}(K),\;\forall K\in\mathcal{T}_h,
$$
where $\mathcal{N}_{k-2}(K)$ is the Nédélec element of the first
kind of degree $k-2$.   
\end{itemize}
\item
For $v\in  \mr{DG}_{k-1} $:
\begin{itemize}
\item
moments on each element
$$
\int_K v\;q\mr{d}\b{x},\quad q\in\mathbb{P}_{k-1}(K),\;\forall K\in\mathcal{T}_h.
$$
\end{itemize}
\end{itemize}
The  implementation of homogeneous boundary conditions can be incorporated following the approach described in \cite[Section 4.1]{MR3802677}.

Similarly, for $\Omega\subset \mathbb{R}^3$ and  $k\ge 3$, we focus the   subcomplex in 
\cite{MR3802677} or \cite[Example 5.10]{MR4654617}: 
\begin{equation*} 
\begin{aligned}
&  0 \hookrightarrow
W_h:=\mr{Neilan}_{k+2} 
\xlongrightarrow{ \b{\mr{grad}} } 
Z_h:=\mr{HuZhang}_{k+1}  \\
&\xlongrightarrow{ \b{\mr{curl}} } 
V_h:=\mr{Stenberg}_{k} 
\xlongrightarrow{ \mr{div} } 
Q_h:= \mr{DG}_{k-1} 
 \to 0,
\end{aligned}
\end{equation*}
where the finite element spaces and their degrees of freedom (DOFs) are referred to \cite[Section 3.2]{MR3802677}.

\begin{remark}
Since $\Omega \subset \mathbb{R}^d$ is contractible, it follows from \cite{MR3802677} that the pair $V_h/Q_h = \mr{Stenberg}_k / \mr{DG}_{k-1}$ satisfies Assumption \ref{assumption:Vh+Qh} for $d=2$ with $k \ge 2$ as well as for $d=3$ with $k \ge 3$.
\end{remark}

\begin{remark}
The finite element pairs $V_h/Q_h = \mr{BDM}_k / \mr{DG}_{k-1}$ and $V_h/Q_h = \mr{RT}_k / \mr{DG}_{k}$ satisfy Assumption \ref{assumption:Vh+Qh} for both $d=2$ with $k\ge 1$  \cite[Example 4.6]{chen2023finite}  and $d=3$ with $k\ge 1$ \cite[(2.3.58)\&(2.3.62)]{Bof1Bre2For3:2013-Mixed}. Moreover, these two  families possess the commuting diagram property, which allows their convergence analysis to be directly established in the literature.
\end{remark}

 \subsection{Stabilized $H(\mr{div})$-conforming finite element method}
The  stabilized $H(\mr{div})$-conforming finite element method  of \eqref{eq:Oseen-equations} reads as follows:
Find $(\b{u}_h,p_h)\in V_h\times Q_h$ such that
\begin{equation}\label{eq:Hdiv-vorticity-stabilization}
\left\{
\begin{aligned}
A(\b{u}_h,\b{v}_h)+\mathcal{P}(\b{v}_h,p_h)=&\mathcal{G}(\b{v}_h)\quad \forall \b{v}_h\in V_h,\\
\mathcal{P}(\b{u}_h,q_h)=&0\quad \forall q_h\in Q_h,
\end{aligned}
\right.
\end{equation}
where $A(\b{u}_h,\b{v}_h):=\nu \mathcal{D}_h(\b{u}_h,\b{v}_h)+\mathcal{C}_h(\b{u}_h,\b{v}_h)+\mathcal{R}(\b{u}_h,\b{v}_h)+\mathcal{S}(\b{u}_h,\b{v}_h)$ and $
\mathcal{G}(\b{v}_h)=(\b{f},\b{v}_h)+\delta_0 (\tau \mr{curl} \b{f},  \mr{curl} \mathcal{L}\b{v}_h)_h$.  The components of $A(\b{u}_h,\b{v}_h)$ are presented in the remainder of this subsection.

\begin{remark}
From Remark \ref{rem:exactness-div},  one has $\mr{div}V_h\subset Q_h$  and  the second equation in \eqref{eq:Hdiv-vorticity-stabilization} implies
\begin{align*} 
V_{h,\mr{div}}:= \{ \b{v}_h  \in  V_h:\;  \nabla \cdot \b{v}_h=0\quad \forall \b{x}\in \Omega \}.
\end{align*}
Note that $V_h\not\subset V$, which implies 
$V_{h,\mr{div}}\not\subset V_{\mr{div}}$.

\end{remark}

In order to introduce $A(\cdot,\cdot)$, we first define the space
$$
V(h)=V_h+ \left[ V\cap H^3(\mathcal{T}_h) \right].
$$
For the discretisation of the diffusion term, we employ the standard symmetric interior penalty form $\mathcal{D}_h: \;V(h)\times V_h\to \mathbb{R}$  \cite{MR2882148}. The form $\mathcal{D}_h$  is defined
 by
 \begin{equation*} 
 \begin{aligned}
\mathcal{D}_h(\b{w},\b{v}_h):=& (\nabla \b{w},\nabla \b{v}_h)_h
-\sum_{F\in\mathcal{F}_h}\left(\langle \{ \nabla \b{w} \}\b{n}_F, [\b{v}_h]\rangle_F
+\langle  [\b{w}],\{ \nabla \b{v}_h \}\b{n}_F\rangle_F \right)\nonumber\\
&+\sum_{F\in\mathcal{F}_h}\frac{\sigma}{h_F}\langle  [\b{w}],[\b{v}_h]\rangle_F,
 \end{aligned}
 \end{equation*}
where  $[\nabla \b{w}]_{ij}=\frac{\partial w_i}{\partial x_j}$ and $h_F$ is the diameter of the face $F\in\mathcal{F}_h$. The penalty parameter $\sigma$ is a positive constant independent with $\nu$ and $h$ and has to be chosen sufficiently large to ensure the coercivity of $\mathcal{D}_h$.

 \begin{remark}\label{remark:DB}
The finite element method enforces the homogeneous Dirichlet boundary condition $\mathbf{u}|_{\partial\Omega} = \mathbf{0}$ through a hybrid strategy. Taking the Stenberg finite element as an example, the method combines:  (1) strong imposition in the discrete space $V_h$ by setting $\mathbf{u}_h(\mathbf{x}) = \mathbf{0}$  at all vertices $\mathbf{x} \in \partial\Omega$ and enforcing the normal flux condition $\int_F \mathbf{u}_h \cdot \mathbf{n} \, q \, \mathrm{d}s = 0$ for all $q \in \mathbb{P}_{k-1}(F)$ on each boundary face $F \in \mathcal{F}_h^{\partial}$;    (2) weak imposition  of the remaining boundary conditions through the numerical scheme.
 \end{remark}

 The reaction term $\mathcal{R}:\;V(h)\times V_h\to \mathbb{R}$  and the pressure-velocity coupling $\mathcal{P}:\; V(h)\times Q \to \mathbb{R}$ remains unchanged:
 \begin{equation*}
 \mathcal{R}(\b{u}_h,\b{v}_h)=(c \b{u}_h,\b{v}_h),\quad \mathcal{P}(\b{v}_h,q_h):=-(\nabla \cdot \b{v}_h,q_h)_h.
 \end{equation*}
 
 For the (linearised) inertia term, we introduce the convection term $\mathcal{C}_h:~ V(h)\times V_h\to \mathbb{R}$ with  upwinding and the vorticity stabilitzation $S:\;V(h)\times V_h\to \mathbb{R}$ as follows:
\begin{align}
\mathcal{C}_h(\b{u}_h,\b{v}_h):=&( (\b{b}\cdot \nabla)\b{u}_h,\b{v}_h )_h-\sum_{F\in\mathcal{F}_h^i}\langle  (\b{b}\cdot \b{n}_F)[\b{u}_h], \{ \b{v}_h \}\rangle_F\nonumber\\
&+\sum_{F\in\mathcal{F}_h}\gamma^c_F\langle  |\b{b}\cdot \b{n}_F|[\b{u}_h], [\b{v}_h ]\rangle_F,\nonumber\\
\mathcal{S}(\b{u}_h,\b{v}_h)=&\delta_0
\left\{ (\tau \mr{curl} \mathcal{L}\b{u}_h,  \mr{curl} \mathcal{L}\b{v}_h)_h+
\langle h^2[(\b{b}\cdot \nabla)\b{u}_h\times \b{n}], [(\b{b}\cdot \nabla)\b{v}_h\times \b{n}] \rangle_{ \mathcal{F}_h }
\right\}\nonumber,
\end{align}
where the broken scalar products are defined in \eqref{eq:broken-inner-products} and $\delta_0>0$ is a constant will be specified later. The weighting factor $\gamma^c_F$ is given by 
$$
\gamma^c_F=
\left\{
\begin{aligned}
&\frac{1}{2}\quad &&\text{ if $F\in\mathcal{F}^i_h$},\\
&\frac{1-\mr{sgn}(\b{b}\cdot \b{n})}{2}\quad  &&\text{if $F\in\mathcal{F}^{\partial}_h$},
\end{aligned}
\right.
$$
 and the stabilization parameter
$\tau|_K = \tau_K$ (also see \cite{MR4331937}) is defined as
\begin{equation*}
\tau_K:=\min\left\{1,\frac{\Vert \b{b} \Vert_{0,\infty} h_K }{\nu}  \right\} \frac{h^3_K}{\Vert \b{b} \Vert_{0,\infty}}.
\end{equation*}

\begin{remark}
 For $H^1$-conforming divergence-free inf-sup stable finite element pairs, the vorticity stabilization introduced in \cite{MR4331937} differs from classical SUPG methods \cite{MR571681} in its stabilization target. While standard SUPG stabilizes the convection-dominated term $(\mathbf{b}\cdot \nabla) \mathbf{u}$ directly, vorticity stabilization specifically acts on the divergence-free term $\mr{curl}((\mathbf{b}\cdot \nabla) \mathbf{u})$. 
Unlike conventional SUPG stabilization, which produces pressure-dependent velocity solutions through its inclusion of pressure terms in the stabilization operator, the vorticity stabilization, by design,  eliminates this coupling and rigorously maintains pressure-robust velocity approximations. 
\end{remark}

\begin{remark}\label{rem:convection-stabilization}
  The term $\mathcal{C}_h(\mathbf{u}_h,\mathbf{v}_h)$ represents the standard upwind discretization of the convection term.  In classical $H(\mr{div})$-conforming finite element methods, particularly BDM and RT elements, conventional upwinding schemes are consistently employed to stabilize the convection-dominated terms \cite{MR4098216,MR3780790,MR2860674,MR4410752}.

The vorticity stabilization $\mathcal{S}(\mathbf{u}_h,\mathbf{v}_h)$ is introduced to ensure that the velocity approximation error achieves an order of $k+1/2$ in the $L^2$ norm while preserving pressure robustness.  If $\mathcal{C}_h(\mathbf{u}_h,\mathbf{v}_h)$ is defined as  
\begin{equation}\label{eq:convection-central-flux}
\big( (\mathbf{b} \cdot \nabla)\mathbf{u}_h, \mathbf{v}_h \big)_h - \sum_{F \in \mathcal{F}_h^i} \big\langle (\mathbf{b} \cdot \mathbf{n}_F)[\mathbf{u}_h], \{\mathbf{v}_h\} \big\rangle_F+\sum_{F\in\mathcal{F}_h^{\partial}}\gamma^c_F\langle  |\b{b}\cdot \b{n}_F|[\b{u}_h], [\b{v}_h ]\rangle_F,  
\end{equation}
which corresponds to a central flux discretization, 
numerical experiments reveal certain instabilities.   
Previous studies \cite{MR1485996} have shown that in convection-dominated regimes, nonconforming finite element methods require not only SUPG-type stabilization but also additional jump penalization to maintain stability, as standalone SUPG stabilization may be insufficient.
\end{remark}

To conduct the analysis, we introduce a norm that is mesh-dependent, as follows:
\begin{equation}\label{eq:norm}
\begin{aligned}
\vvvert \b{v}_h \vvvert^2:=& 
 \Vert \nu^{1/2} \nabla \b{v}_h \Vert^2_{h}+\sum_{F\in\mathcal{F}_h}\sigma   \nu h_F^{-1}\langle  [\b{v}_h], [\b{v}_h ]\rangle_F 
\\
&+  \sum_{F\in\mathcal{F}_h}\langle  |\b{b}\cdot \b{n}_F|[\b{v}_h], [\b{v}_h ]\rangle_F
+ \mathcal{S}(\b{v}_h,\b{v}_h) \\
&+r_0\Vert  \b{v}_h \Vert^2_0.
\end{aligned}
\end{equation}

For the subsequent well-posedness and convergence analysis of numerical solutions, we require both continuous and discrete forms of the trace theorem. The following local trace inequality \cite{MR2249024} is fundamental: there exists a constant $C > 0$ such that for every element $K \in \mathcal{T}_h$, every facet $F \subset \partial K$, and every function $v \in H^1(K)$,
\begin{equation}\label{eq:trace-inequality}
| v |_{0,F} \leq C\left(h^{-1/2}_K | v |_{0,K} + h^{1/2}_K | v |_{1,K}\right).
\end{equation}

Additionally, we employ the discrete trace inequality (see \cite[Remark 1.47]{MR2882148}), which holds particularly because $\mathbf{V}_k(K)$ consists of vector-valued piecewise polynomials:  For any $\mathbf{v}_h \in V_h$ and any element $K \in \mathcal{T}_h$, the following trace inequality holds :
\begin{equation}\label{eq:discrete-trace-inequality}
| \mathbf{v}_h |_{0,\partial K} \leq C h^{-1/2}_K | \mathbf{v}_h |_{0,K}.
\end{equation}

\section{Wellposedness of the finite element method and convergence}\label{sec:convergence}
 To establish the well-posedness and convergence analysis for \eqref{eq:Hdiv-vorticity-stabilization},  we require two fundamental mathematical ingredients. First, we recall the following coercivity result:
\begin{lemma}\label{lem:coercivity}
 Suppose $\sigma>0$ is sufficiently large. Then, the bilinear form $A(\cdot,\cdot)$ is coercive on 
$V_h$ with respect to the energy norm $\vvvert\cdot \vvvert$, i.e.,
\begin{equation}\label{eq:coercivity}
A(\b{v}_h,\b{v}_h)\ge \frac{1}{2}\vvvert  \b{v}_h \vvvert^2\quad\forall \b{v}_h \in V_h.
\end{equation}
\end{lemma}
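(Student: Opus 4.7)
The plan is to establish \eqref{eq:coercivity} term by term, matching each summand of $\vvvert\b{v}_h\vvvert^2$ to a corresponding contribution from $A(\b{v}_h,\b{v}_h)=\nu\mathcal{D}_h(\b{v}_h,\b{v}_h)+\mathcal{C}_h(\b{v}_h,\b{v}_h)+\mathcal{R}(\b{v}_h,\b{v}_h)+\mathcal{S}(\b{v}_h,\b{v}_h)$. The diffusion, the combined convection--reaction, and the vorticity stabilization can be treated independently, with no cross cancellations needed.

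For the diffusive contribution I would invoke the classical SIPG coercivity argument. Applying Cauchy--Schwarz and Young's inequality to the two symmetric consistency terms in $\nu\mathcal{D}_h(\b{v}_h,\b{v}_h)$ and controlling $\{\nabla\b{v}_h\}\b{n}_F$ on each facet by the discrete trace inequality \eqref{eq:discrete-trace-inequality}, one obtains an estimate of the form
\begin{equation*}
\bigl|2\nu\sum_{F\in\mathcal{F}_h}\langle\{\nabla\b{v}_h\}\b{n}_F,[\b{v}_h]\rangle_F\bigr| \le \tfrac{1}{2}\|\nu^{1/2}\nabla\b{v}_h\|_h^2+C\sum_{F\in\mathcal{F}_h}\nu h_F^{-1}\|[\b{v}_h]\|_{0,F}^2,
\end{equation*}
with $C$ depending only on shape regularity and the local polynomial degree. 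Choosing $\sigma\ge 4C$ then produces $\nu\mathcal{D}_h(\b{v}_h,\b{v}_h)\ge \tfrac{1}{2}\|\nu^{1/2}\nabla\b{v}_h\|_h^2+\tfrac{1}{2}\sum_F\sigma\nu h_F^{-1}\|[\b{v}_h]\|_{0,F}^2$.

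For the convection and reaction, elementwise integration by parts gives $((\b{b}\cdot\nabla)\b{v}_h,\b{v}_h)_h = -\tfrac{1}{2}((\nabla\cdot\b{b})\b{v}_h,\b{v}_h)+\tfrac{1}{2}\sum_K\int_{\partial K}(\b{b}\cdot\b{n})|\b{v}_h|^2\,\mr{d}s$. Using the identity $[\,|\b{v}_h|^2\,]=2\{\b{v}_h\}\cdot[\b{v}_h]$ on interior facets, the resulting interior-facet contribution exactly cancels the consistency term $-\sum_{F\in\mathcal{F}_h^i}\langle(\b{b}\cdot\b{n}_F)[\b{v}_h],\{\b{v}_h\}\rangle_F$ in $\mathcal{C}_h$. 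A short sign analysis, distinguishing outflow $\b{b}\cdot\b{n}>0$ from inflow $\b{b}\cdot\b{n}<0$, combines the surviving boundary-facet term from IBP with the upwind weight $\gamma^c_F=(1-\mr{sgn}(\b{b}\cdot\b{n}))/2$ to produce $\tfrac{1}{2}\langle|\b{b}\cdot\b{n}|\b{v}_h,\b{v}_h\rangle_F$ on every $F\in\mathcal{F}_h^\partial$, while the interior upwind penalty supplies $\tfrac{1}{2}\sum_{F\in\mathcal{F}_h^i}\langle|\b{b}\cdot\b{n}_F|[\b{v}_h],[\b{v}_h]\rangle_F$. Adding $\mathcal{R}(\b{v}_h,\b{v}_h)=(c\b{v}_h,\b{v}_h)$ and invoking the structural hypothesis \eqref{eq:condition-on-b+c} yields
\begin{equation*}
\mathcal{C}_h(\b{v}_h,\b{v}_h)+\mathcal{R}(\b{v}_h,\b{v}_h)\ge r_0\|\b{v}_h\|_0^2+\tfrac{1}{2}\sum_{F\in\mathcal{F}_h}\langle|\b{b}\cdot\b{n}_F|[\b{v}_h],[\b{v}_h]\rangle_F.
\end{equation*}

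Finally, $\mathcal{S}(\b{v}_h,\b{v}_h)$ is a sum of two squared broken $L^2$-quantities and contributes itself to the lower bound. Summing the three estimates gives $A(\b{v}_h,\b{v}_h)\ge\tfrac{1}{2}\vvvert\b{v}_h\vvvert^2$. The only genuinely delicate point is the boundary bookkeeping in the convection identity: although $\b{v}_h\cdot\b{n}=0$ on $\partial\Omega$, the tangential components of $\b{v}_h$ do not vanish there, so the boundary IBP term is nonzero, and the piecewise definition of $\gamma^c_F$ on $\partial\Omega$ is crafted precisely to absorb it with the correct sign on both outflow and inflow regions. Apart from this sign verification, the remainder of the argument is essentially routine accounting.
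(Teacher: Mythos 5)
Your proposal is correct, and it is exactly the standard SIPG-plus-upwind coercivity argument that the paper invokes by citing \cite[Lemma 4.59]{MR2882148} — the paper gives no details beyond that reference, so your write-up simply spells out the argument behind the citation, together with the (nonnegative) contributions of $\mathcal{R}$ and $\mathcal{S}$ and the sign bookkeeping for $\gamma^c_F$ on boundary facets, all of which check out against \eqref{eq:condition-on-b+c} and the definition \eqref{eq:norm}.
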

\begin{proof}
The proof follows from \cite[Lemma 4.59]{MR2882148}.
\end{proof}

Second,  the following discrete inf-sup condition for the finite element pair $V_h/Q_h$ is also needed. 
\begin{assumption}\label{assum:discrete-inf-sup-Vh-Qh}
There exists a constant $\beta^h_{is}>0$, independent of the mesh size $h$ and the viscosity $\nu$, such that
\begin{align}
&\inf\limits_{q_h\in Q_h}\sup\limits_{\b{v}_h\in V_h}\frac{\mathcal{P}( \b{v}_h,q_h)}{\Vert \b{v}_h \Vert_{1,h} \Vert q_h \Vert_{0}}=\beta_{is}^h,\label{eq:discrete-inf-sup-2}\\
&\inf\limits_{q_h\in Q_h}\sup\limits_{\b{v}_h\in V_h}\frac{\mathcal{P}( \b{v}_h,q_h)}{\vvvert \b{v}_h \vvvert \Vert q_h \Vert_{0}}\ge C (\nu+\Vert \b{b} \Vert_{0,\infty}  h+1)^{-1/2}\beta_{is}^h,\label{eq:discrete-inf-sup-1}
\end{align}
where  the mesh-dependent norm is defined as
$$
\Vert \b{v}_h \Vert_{1,h}^2= \Vert \nabla \b{v}_h  \Vert^2_h
+\sum_{F\in\mathcal{F}_h}h^{-1}_F \Vert [\b{v}_h ] \Vert^2_{0,F}.
$$
\end{assumption}
According to \cite[(1.3)]{MR1974504},  for all $\b{v}_h\in V_h$, we have the following stability estimate: 
 $$
 \begin{aligned}
 \Vert \b{v}_h \Vert^2_0\le & C \left[\Vert \nabla \b{v}_h \Vert^2_h
 +\sum_{F\in\mathcal{F}^i_h }   h^{-d}_F\left( \int_F [\b{v}_h]\mr{d}s\right)^2+\left(\int_{\partial\Omega} \b{v}_h \mr{d}s \right)^2\right]\\
 \le & C\Vert \b{v}_h \Vert_{1,h}^2.
 \end{aligned}
 $$ 
Combining this with inverse inequalities, we obtain the following bound 
\begin{align*}
 \vvvert \b{v}_h \vvvert^2\le & C(\nu+\Vert \b{b} \Vert_{0,\infty} h+ \max_{K\in\mathcal{T}_h} \tau_Kh^{-2}_K(\nu^2h^{-2}_K+\Vert \b{b} \Vert_{0,\infty}+1)  +1 ) \Vert \b{v}_h \Vert_{1,h}^2,\\
 \le &  C(\nu+\Vert \b{b} \Vert_{0,\infty} h+(\nu+h+h/\Vert \b{b} \Vert_{0,\infty})  +1 ) \Vert \b{v}_h \Vert_{1,h}^2
 \end{align*} 
and
\begin{equation}\label{eq:equivalence-triple-H1}
\begin{aligned}
&\vvvert \b{v}_h \vvvert\le C(\nu+\Vert \b{b} \Vert_{0,\infty}  h+1)^{1/2}\Vert \b{v}_h \Vert_{1,h},
\end{aligned}
\end{equation}
where $C$ is a positive constant independent of $h$ and $\nu$. 
Furthermore, if the discrete inf-sup condition \eqref{eq:discrete-inf-sup-2} holds, then the equivalence \eqref{eq:equivalence-triple-H1} immediately implies the validity of \eqref{eq:discrete-inf-sup-1}.

 \begin{lemma}\label{lem:stability-stenberg}
For the specific choice of finite element pair $V_h/Q_h=\mr{Stenberg}_{k} /\mr{DG}_{k-1} $, where  $k\ge 2$ in 2D or $k\ge 3$ in 3D,  the stability conditions in Assumption  \ref{assum:discrete-inf-sup-Vh-Qh} are satisfied.
\end{lemma}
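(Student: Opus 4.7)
The plan is to establish the first bound \eqref{eq:discrete-inf-sup-2} directly and then appeal to the equivalence \eqref{eq:equivalence-triple-H1}, which is already worked out in the surrounding text, to deduce \eqref{eq:discrete-inf-sup-1}. Given $q_h \in Q_h \subset L^2_0(\Omega)$, I would use continuous surjectivity of the divergence on $H^1_0(\Omega)^d$ (a standard consequence of Bogovskii's construction on Lipschitz contractible domains) to select $\mathbf{w} \in H^1_0(\Omega)^d$ with $\nabla \cdot \mathbf{w} = -q_h$ and $\|\mathbf{w}\|_1 \le C_\Omega \|q_h\|_0$. I would then construct a Fortin-type operator $\Pi_h : H^1_0(\Omega)^d \to V_h$ with the two defining properties
\begin{equation*}
(\nabla\cdot(\mathbf{w}-\Pi_h\mathbf{w}),q_h)_h = 0 \quad \forall\, q_h\in Q_h,
\qquad \|\Pi_h\mathbf{w}\|_{1,h}\le C\|\mathbf{w}\|_1 .
\end{equation*}
Setting $\mathbf{v}_h := \Pi_h\mathbf{w}$ then yields $-(\nabla\cdot\mathbf{v}_h,q_h) = \|q_h\|_0^2$ and $\|\mathbf{v}_h\|_{1,h}\le CC_\Omega\|q_h\|_0$, which is exactly \eqref{eq:discrete-inf-sup-2} with $\beta_{is}^h = (CC_\Omega)^{-1}$, independent of $h$ and $\nu$.

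The technical heart is the construction of $\Pi_h$ for the Stenberg pair, where the vertex continuity constraints prevent a direct canonical interpolation and, in particular, spoil the commuting diagram property used in the classical BDM/RT analysis. I would follow Stenberg's original three-step strategy \cite{MR2594344}: (i) apply a Scott--Zhang quasi-interpolation into the Hermite-type continuous subspace associated with the vertex and Lagrange degrees of freedom of $V_h$, producing an $H^1$-stable first approximation that respects the vertex continuity; (ii) add a face-supported correction using edge or face bubbles to match the normal moments $\int_F (\mathbf{w}-\pi_h^{SZ}\mathbf{w})\cdot\mathbf{n}\, q\,\mathrm ds$ against $\mathbb{P}_{k-2}(F)$; (iii) add an element-interior correction using Nédélec bubbles to match the remaining volume moments against $\mathcal{N}_{k-2}(K)$. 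After step (iii), integration by parts on each $K$ shows that $(\nabla\cdot(\mathbf{w}-\Pi_h\mathbf{w}), q_h)_K = 0$ for every $q_h \in \mathbb{P}_{k-1}(K)$, which is precisely the Fortin condition. $H^1$-stability in step (i) is classical, while in (ii)--(iii) each correction is locally supported on a single face patch or element, so scaling together with inverse estimates on the bubble spaces controls the correction on $K$ by $\|\mathbf{w}-\pi_h^{SZ}\mathbf{w}\|_{0,\omega_K}$, and hence by $\|\mathbf{w}\|_{1,\omega_K}$; summing over elements yields the global bound.

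The main obstacle is verifying local $H^1$-stability of the corrections without appealing to the commuting diagram, since that property is what the paper explicitly wishes to avoid. This is resolved by the locality of each correction step together with the dimension-dependent polynomial thresholds $k\ge 2$ in 2D and $k\ge 3$ in 3D, which are precisely the ranges in which Stenberg's DoFs on faces and element interiors are rich enough to absorb the correction while keeping the vertex values from (i) intact. In the write-up I would either quote the operator from \cite{MR2594344} or supply the three-step construction in an appendix; either way the resulting $\beta_{is}^h$ depends only on $\Omega$, shape-regularity, and the polynomial degree $k$.
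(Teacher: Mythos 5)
Your overall architecture (build a Fortin operator, combine with the continuous Bogovskii bound to get \eqref{eq:discrete-inf-sup-2}, then invoke the norm equivalence \eqref{eq:equivalence-triple-H1} to obtain \eqref{eq:discrete-inf-sup-1}) is sound, and the last reduction matches the paper. However, the central step of your construction fails, and it fails for exactly the reason that makes the Stenberg element delicate in the first place. After integrating by parts on an element $K$, the Fortin condition $(\nabla\cdot(\mathbf{w}-\Pi_h\mathbf{w}),q_h)_K=0$ for all $q_h\in\mathbb{P}_{k-1}(K)$ requires control of the boundary term $\langle(\mathbf{w}-\Pi_h\mathbf{w})\cdot\mathbf{n},\,q_h\rangle_{\partial K}$ with $q_h|_F\in\mathbb{P}_{k-1}(F)$. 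The Stenberg element only provides normal face moments against $\mathbb{P}_{k-2}(F)$; the remaining degree-$(k-1)$ content of the normal trace on $F$ is tied to the vertex values, which your step (i) fixes by Scott--Zhang averaging (necessarily, since $\mathbf{w}\in H^1$ has no point values for $d\ge 2$) rather than by moment matching. Consequently your steps (ii)--(iii) annihilate the volume term $(\mathbf{w}-\Pi_h\mathbf{w},\nabla q_h)_K$ and the $\mathbb{P}_{k-2}(F)$ part of the flux, but leave a nonzero residual in $\langle(\mathbf{w}-\Pi_h\mathbf{w})\cdot\mathbf{n},q_h\rangle_{\partial K}$, so the claimed identity ``$(\nabla\cdot(\mathbf{w}-\Pi_h\mathbf{w}),q_h)_K=0$ for every $q_h\in\mathbb{P}_{k-1}(K)$'' does not hold. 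This is precisely the obstruction expressed by the failure of the commuting diagram property, and it cannot be absorbed by locality or by the thresholds $k\ge2$ (2D), $k\ge3$ (3D).

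The paper's proof avoids a global Fortin operator for the full pair $V_h/Q_h$ altogether. It uses Stenberg's macroelement technique \cite{MR725982}: the pressure space is split as $Q_h=\bigoplus_M Q_h(M)\oplus\bar Q_h$ over two-element macroelements $M$. The local part $Q_h(M)$ is handled by a purely algebraic kernel argument (the macroelement condition \eqref{eq:MC}), using test functions supported inside $M$ whose only active degrees of freedom are interior/face moments, so the problematic vertex values never enter. A Fortin operator is then needed only for the coarse space $\bar Q_h$ of macroelement-wise constants, for which the Fortin condition reduces to matching a \emph{single} flux integral $\int_T(\mathbf{v}-\pi\mathbf{v})\cdot\mathbf{n}\,\mathrm{d}s$ per macroelement interface \eqref{eq:ME-Fortin-C1}; this one moment is easily matched while the vertex degrees of freedom remain free to be set by averaging \eqref{eq:ME-Fortin-C2} for $H^1$-stability. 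If you want to keep a direct Fortin-operator proof, you would have to either enrich the face corrections to reach $\mathbb{P}_{k-1}(F)$ (which the Stenberg space does not permit without disturbing vertex continuity) or adopt the macroelement splitting as the paper does.
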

\begin{proof}
The detailed proof can be found in the Appendix.
\end{proof}

The well-posedness of the discrete problem \eqref{eq:Hdiv-vorticity-stabilization} follows from two key ingredients: the coercivity property established in Lemma \ref{lem:coercivity} and the discrete inf-sup condition \eqref{eq:discrete-inf-sup-1}. A complete analysis of this mixed formulation can be found in   \cite{Bof1Bre2For3:2013-Mixed}.

Moreover, the numerical scheme \eqref{eq:Hdiv-vorticity-stabilization} maintains strong consistency when applied to sufficiently smooth velocity-pressure pairs $(\b{u},p)$, that is,
\begin{equation}\label{eq:consistency}
\left\{
\begin{aligned}
A(\b{u}-\b{u}_h,\b{v}_h)+\mathcal{P}(\b{v}_h,p-p_h)=&0\quad \forall \b{v}_h\in V_h,\\
\mathcal{P}(\b{u}-\b{u}_h,q_h)=&0\quad \forall q_h\in Q_h.
\end{aligned}
\right.
\end{equation}

\subsection{An error estimate for the velocity}
Let $\b{\psi}_h\in Z_h$ be arbitrary and set $\b{w}_h:=\mr{curl} \b{\psi}_h \in V_{h,\mr{div}}$.
Define 
$$
\b{u}-\b{u}_h =\b{\eta}-\b{\xi}_h.
$$
where
$\b{\eta}:= \b{u}-\b{w}_h=\mr{curl}(\b{z}-\b{\psi}_h)$ from Theorem \ref{the:potential-of-u} and $\b{\xi}_h:=\b{w}_h- \b{u}_h$.
\begin{remark}
Based on the exactness of the finite element subcomplex (Assumption \ref{assumption:Vh+Qh}), i.e., $\mr{curl}Z_h=V_{h,\mr{div}}$,  we establish a relationship between the approximation errors $\b{u}-\b{w}_h$ for   $\b{w}_h\in V_{h,\mr{div}}$ and $\b{z}-\b{\psi}_h$ for any $\b{\psi}_h\in Z_h$. The latter is more amenable to analysis due to the simpler structure of $Z_h$.
\end{remark}

Let $z$ be sufficiently regular. We define the norm:
\begin{equation*} 
\Vert \b{z} \Vert^2_{\ast}:=\vvvert \mr{curl} \b{z} \vvvert^2+(\nu+h)\sum_{s=0}^3h^{2s-4}
\Vert D^s \b{z} \Vert^2_h,
\end{equation*}
where  $D^s \b{z}$ denotes the derivative tensor  $(\partial^{\b{\alpha}}\b{z})_{|\b{\alpha}|=s}$, that is, gradient for $s = 1$, Hessian matrix for $s = 2$, etc.
Our analysis begins with a semi-robust estimate of the   convection  and reaction terms.
\begin{lemma}\label{lem:convection-bound}
There exists a constant $C>0$  independent of $h$ and $\nu$  such that
$$
|\mathcal{C}_h(\b{\eta},\b{v}_h)+\mathcal{R}(\b{\eta},\b{v}_h)|\le C   \Vert \b{z}-\b{\psi}_h \Vert_* \; \vvvert \b{v}_h \vvvert,\quad \forall  \b{v}_h \in  V_h.
$$
\end{lemma}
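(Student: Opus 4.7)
The plan is to split
\[
\mathcal{C}_h(\boldsymbol{\eta},\mathbf{v}_h)+\mathcal{R}(\boldsymbol{\eta},\mathbf{v}_h) = T_V + T_U + T_J + T_R,
\]
with $T_V:=((\mathbf{b}\cdot\nabla)\boldsymbol{\eta},\mathbf{v}_h)_h$ the volume convection, $T_U:=-\sum_{F\in\mathcal{F}_h^i}\langle(\mathbf{b}\cdot\mathbf{n}_F)[\boldsymbol{\eta}],\{\mathbf{v}_h\}\rangle_F$ the upwind flux, $T_J:=\sum_{F\in\mathcal{F}_h}\gamma^c_F\langle|\mathbf{b}\cdot\mathbf{n}_F|[\boldsymbol{\eta}],[\mathbf{v}_h]\rangle_F$ the jump penalty, and $T_R:=(c\boldsymbol{\eta},\mathbf{v}_h)$. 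The two easy contributions are dispatched first. For $T_J$, a Cauchy--Schwarz inequality in the $|\mathbf{b}\cdot\mathbf{n}_F|$-weighted facet inner product yields $|T_J|\le \vvvert\boldsymbol{\eta}\vvvert\,\vvvert\mathbf{v}_h\vvvert\le \|\mathbf{z}-\boldsymbol{\psi}_h\|_*\,\vvvert\mathbf{v}_h\vvvert$, since $\|\mathbf{z}-\boldsymbol{\psi}_h\|_*$ dominates $\vvvert\mathrm{curl}(\mathbf{z}-\boldsymbol{\psi}_h)\vvvert=\vvvert\boldsymbol{\eta}\vvvert$ by construction and $\vvvert\cdot\vvvert$ contains exactly the required facet seminorm. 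For $T_R$, the $L^2$-control provided by the reactive term in $\vvvert\cdot\vvvert$ gives $|T_R|\le \|c\|_{0,\infty}r_0^{-1}\vvvert\boldsymbol{\eta}\vvvert\,\vvvert\mathbf{v}_h\vvvert$, again bounded as required.

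To couple $T_V$ with $T_U$ I would elementwise integrate by parts
\[
T_V=-(\boldsymbol{\eta},(\mathbf{b}\cdot\nabla)\mathbf{v}_h)_h-((\nabla\cdot\mathbf{b})\boldsymbol{\eta},\mathbf{v}_h)+\sum_{F\in\mathcal{F}_h^i}\!\langle(\mathbf{b}\cdot\mathbf{n}_F)\{\boldsymbol{\eta}\},[\mathbf{v}_h]\rangle_F+\sum_{F\in\mathcal{F}_h^i}\!\langle(\mathbf{b}\cdot\mathbf{n}_F)[\boldsymbol{\eta}],\{\mathbf{v}_h\}\rangle_F+\text{boundary},
\]
so that the $\{\mathbf{v}_h\}$--$[\boldsymbol{\eta}]$ piece cancels $T_U$ exactly. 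The surviving $\{\boldsymbol{\eta}\}$--$[\mathbf{v}_h]$ facet term and the boundary contribution are handled by a $|\mathbf{b}\cdot\mathbf{n}_F|^{1/2}$-weighted Cauchy--Schwarz, where the $\mathbf{v}_h$-factor is absorbed in $\vvvert\mathbf{v}_h\vvvert$ and the $\boldsymbol{\eta}$-factor is estimated by the continuous trace inequality \eqref{eq:trace-inequality} applied to $\boldsymbol{\eta}=\mathrm{curl}(\mathbf{z}-\boldsymbol{\psi}_h)$, giving $\sum_F\|\{\boldsymbol{\eta}\}\|_F^2\le C(h^{-1}\|\nabla(\mathbf{z}-\boldsymbol{\psi}_h)\|_h^2+h\|D^2(\mathbf{z}-\boldsymbol{\psi}_h)\|_h^2)$. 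Inserting the $s=1,2$ terms of $\|\cdot\|_*^2$ and using $h/(\nu+h)\le 1$ absorbs all weights to yield a bound of order $\|\mathbf{z}-\boldsymbol{\psi}_h\|_*\,\vvvert\mathbf{v}_h\vvvert$. The reactive leftover $-((\nabla\cdot\mathbf{b})\boldsymbol{\eta},\mathbf{v}_h)$ is handled exactly as $T_R$.

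The genuinely delicate piece is the residual volume term $-(\boldsymbol{\eta},(\mathbf{b}\cdot\nabla)\mathbf{v}_h)_h$. Here I exploit the representation $\boldsymbol{\eta}=\mathrm{curl}(\mathbf{z}-\boldsymbol{\psi}_h)$ and integrate by parts again against $\mathrm{curl}$ (using the 2D $\mathrm{curl}$--$\mathrm{rot}$ duality, or the 3D curl--curl identity together with the tangential-continuity \eqref{eq:regularity-Zh} of $\mathbf{z}-\boldsymbol{\psi}_h$), producing
\[
-(\boldsymbol{\eta},(\mathbf{b}\cdot\nabla)\mathbf{v}_h)_h = -(\mathbf{z}-\boldsymbol{\psi}_h,\mathrm{curl}((\mathbf{b}\cdot\nabla)\mathbf{v}_h))_h + \text{facet terms}.
\]
The algebraic identity $\mathrm{curl}((\mathbf{b}\cdot\nabla)\mathbf{v}_h)=\mathrm{curl}\mathcal{L}\mathbf{v}_h+\nu\Delta\mathrm{curl}\mathbf{v}_h-c\,\mathrm{curl}\mathbf{v}_h+\text{l.o.t.}(\nabla\mathbf{b},\nabla c)$ then rewrites the leading term as $(\mathbf{z}-\boldsymbol{\psi}_h,\mathrm{curl}\mathcal{L}\mathbf{v}_h)_h$, which is split by a $\tau$-weighted Cauchy--Schwarz: the $\mathbf{v}_h$-side is directly controlled by the vorticity stabilization $\mathcal{S}(\mathbf{v}_h,\mathbf{v}_h)\le\vvvert\mathbf{v}_h\vvvert^2$, while $\sum_K\tau_K^{-1}\|\mathbf{z}-\boldsymbol{\psi}_h\|_K^2\le C\|\mathbf{b}\|_{0,\infty}(\nu+h)^{-1}h\,\|\mathbf{z}-\boldsymbol{\psi}_h\|_*^2\le C\|\mathbf{z}-\boldsymbol{\psi}_h\|_*^2$ is obtained from the $s=0$ weight $(\nu+h)h^{-4}\|\cdot\|_h^2$ in $\|\cdot\|_*^2$ combined with the definition of $\tau_K$. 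The $\nu\Delta\mathrm{curl}$ contribution is absorbed by two inverse inequalities paired with the $s=0$ weight and the factor $\nu^{1/2}/(\nu+h)^{1/2}\le 1$; the $c\,\mathrm{curl}\mathbf{v}_h$ contribution is first converted by a curl integration by parts back into an element term $(\mathrm{curl}(c(\mathbf{z}-\boldsymbol{\psi}_h)),\mathbf{v}_h)_h$ (to avoid the unbounded factor $\nu^{-1/2}$ coming from $\|\nabla\mathbf{v}_h\|_h$) plus a facet term controlled as in the previous paragraph.

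The main obstacle is step~three: matching the $(\nu+h)$-weights of $\|\cdot\|_*^2$ with the $\tau_K$-weighted stabilization, so that every occurrence of $(\nu+h)^{-1/2}$ or $\nu^{-1/2}$ is cancelled by a compensating $h$-power drawn from the norm definition or from an inverse/trace estimate, and thus the final constant is independent of both $h$ and $\nu$. This is exactly the regime in which the definition of $\tau_K=\min(1,\|\mathbf{b}\|_{0,\infty}h_K/\nu)h_K^3/\|\mathbf{b}\|_{0,\infty}$ and the four-scale structure $h^{2s-4}$ of $\|\cdot\|_*$ were tuned, and a systematic bookkeeping of these scalings is the technical heart of the proof.
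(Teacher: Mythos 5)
Your proposal is correct and follows essentially the same route as the paper: integrate by parts to shift the convective derivative onto the test function, use $\b{\eta}=\mr{curl}(\b{z}-\b{\psi}_h)$ and a second integration by parts so that the tangential continuity \eqref{eq:regularity-Zh} kills one facet term, control the other facet term by the $h^2$-weighted jump part of $\mathcal{S}$, and treat the volume residual $(\b{z}-\b{\psi}_h,\mr{curl}((\b{b}\cdot\nabla)\b{v}_h))_h$ by a $\tau$-weighted Cauchy--Schwarz against the vorticity stabilization plus inverse inequalities, exactly as in the paper's bounds for $\mathscr{C}_{2,a}$, $\mathscr{C}_{2,b}$, $\mathscr{C}_{2,c}$. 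The only minor deviation is your extra integration by parts for the $c\,\mr{curl}\,\b{v}_h$ contribution, which is unnecessary: the paper bounds $\Vert\mr{curl}(\nu\Delta\b{\xi}_h-c\b{\xi}_h)\Vert_{0,K}$ directly by an inverse inequality and absorbs it via the $r_0\Vert\cdot\Vert_0^2$ part of $\vvvert\cdot\vvvert$, so no $\nu^{-1/2}$ factor ever arises.
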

\begin{proof}
Integration by parts gives 
\begin{equation}\label{eq:ch}
\begin{aligned}
&\mathcal{C}_h(\b{\eta},\b{\xi}_h)+\mathcal{R}(\b{\eta},\b{\xi}_h)\\
=&\sum_{F\in\mathcal{F}_h}\int_{F}\b{b}\cdot \b{n}_F \{ \b{\eta} \}\cdot [ \b{\xi}_h ]\mr{d}s-( \b{\eta},(\b{b}\cdot \nabla)\b{\xi}_h )_h\\
&+\sum_{F\in\mathcal{F}_h}\gamma^c_F\langle  |\b{b}\cdot \b{n}_F|[\b{\eta}], [\b{\xi}_h ]\rangle_F+( (c-\nabla\cdot \b{b}) \b{\eta},\b{\xi}_h)\\
=:&\mathscr{C}_1+\mathscr{C}_2+\mathscr{C}_3+\mathscr{C}_4.
\end{aligned}
\end{equation}

We now bound each term on the right-hand side of \eqref{eq:ch}.
By the trace inequality \eqref{eq:trace-inequality}, for any facet $F\in\mathcal{F}_h$ 
\begin{equation}\label{eq:F-to-K}
\Vert [\b{\eta}] \Vert^2_{0,F}+\Vert \{ \b{\eta} \} \Vert^2_{0,F} \le 
C\sum_{K\in K_F}\left(  h^{-1} \Vert \b{\eta}\Vert^2_{0,K}+h \Vert \nabla \b{\eta} \Vert_{0,K}^2 \right)
\end{equation}
where $K_F:=\{ K\in\mathcal{T}_h:\, F\subset \partial K \}$.
Applying the Cauchy-Schwarz inequality and \eqref{eq:F-to-K}, we obtain
\begin{equation}\label{eq:C1+C3}
\begin{aligned}
|\mathscr{C}_1|+|\mathscr{C}_3|
 \le & 
 C\sum_{F\in\mathcal{F}_h} (\Vert [\b{\eta}] \Vert_{0,F}+\Vert \{ \b{\eta} \} \Vert_{0,F})\;
 \Vert\sqrt{ |\b{b}\cdot \b{n}_F| }[\b{\xi}_h ]\Vert_{0,F}\\
 \le & C \left( h^{-1} \Vert \b{\eta}\Vert^2_{0}+h \Vert \nabla \b{\eta} \Vert_{0}^2\right)^{1/2}\; \vvvert  \b{\xi}_h \vvvert
 \\
 \le &
 C\Vert \b{z}-\b{\psi}_h \Vert_*\; \vvvert  \b{\xi}_h \vvvert,
\end{aligned}
\end{equation}
and
\begin{equation}\label{eq:C4}
|\mathscr{C}_4|=( (c-\nabla\cdot \b{b}) \b{\eta},\b{\xi}_h)\le C \Vert \b{\eta} \Vert_0 \; \Vert \b{\xi}_h \Vert_0
\le C h^{1/2} \Vert \b{z}-\b{\psi}_h \Vert_*\; \vvvert  \b{\xi}_h \vvvert.
\end{equation}

We now proceed to estimate the most challenging term  $\mathcal{C}_2$. We reformulate $\mathcal{C}_2$ through integration by parts:
\begin{equation}\label{eq:C2-beta=1}
\begin{aligned}
\mathscr{C}_2=&-( \b{\eta},(\b{b}\cdot \nabla)\b{\xi}_h )_h
=-(\mr{curl} (\b{z}-\b{\psi}_h),(\b{b}\cdot \nabla)\b{\xi}_h )_h \\
=&-\sum_{F\in\mathcal{F}_h^i}\int_{F} [\b{z}-\b{\psi}_h]\cdot \{ (\b{b}\cdot \nabla)\b{\xi}_h \times \b{n}_F\} \mr{d}s \\
&-\sum_{F\in\mathcal{F}_h}\int_{F} \{ \b{z}-\b{\psi}_h \}\cdot  [ (\b{b}\cdot \nabla)\b{\xi}_h\times \b{n}_F ]\mr{d}s\\
&-(\b{z}-\b{\psi}_h ,  \mr{curl}((\b{b}\cdot \nabla)\b{\xi}_h))_h\\
=:&\mathscr{C}_{2,a}+\mathscr{C}_{2,b}+\mathscr{C}_{2,c}.
\end{aligned}
\end{equation}
When $\Omega\subset \mathbb{R}^3$, direct calculation yields
$$
[\b{z}-\b{\psi}_h]\cdot \{ (\b{b}\cdot \nabla)\b{\xi}_h \times \b{n}_F\} 
=-[\b{z}-\b{\psi}_h] \times \b{n}_F\cdot \{ (\b{b}\cdot \nabla)\b{\xi}_h\} \quad \forall F\in\mathcal{F}^i_h.
$$
Combining this identity with the regularity results from \eqref{eq:regularity-Zh}, we conclude that
\begin{equation}\label{eq:C2a}
\mathscr{C}_{2,a}=0.
\end{equation}
An application of the Cauchy-Schwarz inequality, together with the substitution $\eta\to \b{z}-\b{\psi}_h$  in \eqref{eq:F-to-K}, leads to  
\begin{equation}\label{eq:C2b}
\begin{aligned}
&|\mathscr{C}_{2,b}|=\left|\sum_{F\in\mathcal{F}_h}\int_{F} \{ \b{z}-\b{\psi}_h \}\cdot  [ (\b{b}\cdot \nabla)\b{\xi}_h\times \b{n}_F ]\mr{d}s\right|\\
\le & \left(\sum_{F\in\mathcal{F}_h}h^{-2}\Vert \{ \b{z}-\b{\psi}_h \} \Vert^2_{0,F} \right)^{1/2} 
\left(\sum_{F\in\mathcal{F}_h}h^2\Vert  [ (\b{b}\cdot \nabla)\b{\xi}_h\times \b{n}_F ] \Vert^2_{0,F} \right)^{1/2} \\
\le &
C h^{-1} \left( h^{-1}\Vert \b{z}-\b{\psi}_h \Vert^2_0+h\Vert \nabla (\b{z}-\b{\psi}_h) \Vert^2_h \right)^{1/2}\;  \vvvert  \b{\xi}_h \vvvert\\
 \le &
 C\Vert \b{z}-\b{\psi}_h \Vert_*\; \vvvert  \b{\xi}_h \vvvert.
\end{aligned}
\end{equation}
For every element $K\in\mathcal{T}_h$, the inverse inequality yields
\begin{align*}
\Vert  \mr{curl}(\nu \Delta \b{\xi}_h-c \b{\xi}_h) \Vert_{0,K}
\le & C h^{-1}_K(h^{-1}_K\nu \Vert \nabla  \b{\xi}_h\Vert_{0,K}+ \Vert c \Vert_{0,\infty,K} \Vert \b{\xi}_h  \Vert_{0,K})\\
\le &C (\nu^{1/2}h^{-2}+h^{-1})\vvvert  \b{\xi}_h \vvvert_K.
\end{align*}
Applying the Cauchy-Schwarz inequality, we obtain  
\begin{equation}\label{eq:C2c}
\begin{aligned}
&|\mathscr{C}_{2,c}|=|(\b{z}-\b{\psi}_h ,  \mr{curl}((\b{b}\cdot \nabla)\b{\xi}_h))_h|\\
=&(\b{z}-\b{\psi}_h ,  \mr{curl}(\mathcal{L}\b{\xi}_h))_h
+(\b{z}-\b{\psi}_h ,  \mr{curl}(\nu \Delta \b{\xi}_h-c \b{\xi}_h))_h\\
\le &
\left( \sum_{K\in\mathcal{T}_h} \tau_K^{-1}\Vert  \b{z}-\b{\psi}_h \Vert_{0,K}^2\right)^{1/2}
\left( \sum_{K\in\mathcal{T}_h} \tau_K\Vert  \mr{curl}(\mathcal{L}\b{\xi}_h) \Vert_{0,K}^2\right)^{1/2}\\
&+\sum_{K\in\mathcal{T}_h} \Vert  \b{z}-\b{\psi}_h \Vert_{0,K} \Vert  \mr{curl}(\nu \Delta \b{\xi}_h-c \b{\xi}_h) \Vert_{0,K}\\ 
\le &
C\left(h^{-3/2}+\nu^{1/2}h^{-2}\right) \Vert \b{z}-\b{\psi}_h \Vert_0\;  \vvvert \b{\xi}_h \vvvert\\
 \le &
 C\Vert \b{z}-\b{\psi}_h \Vert_*\; \vvvert  \b{\xi}_h \vvvert. 
\end{aligned}
\end{equation} 
 Substituting the bounds from \eqref{eq:C2a}---\eqref{eq:C2c} into \eqref{eq:C2-beta=1}, we derive the following estimate:
\begin{equation}\label{eq:C2-beta=1-bound}
|\mathscr{C}_2|\le C \Vert \b{z}-\b{\psi}_h \Vert_*\; \vvvert  \b{\xi}_h \vvvert.
\end{equation}
 Finally, combining the results from \eqref{eq:C1+C3}, \eqref{eq:C4} and   \eqref{eq:C2-beta=1-bound}, we obtain the desired estimate. 
\end{proof}

\begin{theorem}\label{the:uh-wh}
Let $\b{u}$ be the solution to \eqref{eq:Oseen-equations} and $\b{z}$ satisfy \eqref{eq:u-to-z}. 
Let $\b{u}_h\in V_h$ solve \eqref{eq:Hdiv-vorticity-stabilization}. Suppose Assumption \ref{assumption:Vh+Qh} holds.  Then, for any $\b{\psi}_h\in Z_h$ and  $\b{w}_h=\mr{curl} \b{\psi}_h\in V_{h,\mr{div}}$, we obtain
$$
\vvvert \b{u}-\b{u}_h \vvvert+\vvvert \b{u}_h-\b{w}_h \vvvert\le C \Vert \b{z}-\b{\psi}_h \Vert_*.
$$
\end{theorem}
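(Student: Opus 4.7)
The plan is to exploit the decomposition $\b{u}-\b{u}_h=\b{\eta}-\b{\xi}_h$, reduce the analysis to a standard coercivity--plus--consistency argument, and then estimate each of the four constituent pieces of $A(\b{\eta},\b{\xi}_h)$ against $\Vert\b{z}-\b{\psi}_h\Vert_\ast\,\vvvert\b{\xi}_h\vvvert$.

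First I would verify that $\b{\xi}_h=\b{w}_h-\b{u}_h$ is pointwise divergence-free: $\b{w}_h=\mr{curl}\,\b{\psi}_h\in V_{h,\mr{div}}$ by \eqref{eq:Zh-Vh-div}, and $\b{u}_h\in V_{h,\mr{div}}$ by the second equation of \eqref{eq:Hdiv-vorticity-stabilization} combined with $\mr{div}V_h\subset Q_h$ (Remark~\ref{rem:exactness-div}). Consequently $\mathcal{P}(\b{\xi}_h,p-p_h)=0$, and testing the consistency identity \eqref{eq:consistency} with $\b{v}_h=\b{\xi}_h$ yields $A(\b{\xi}_h,\b{\xi}_h)=A(\b{\eta},\b{\xi}_h)$. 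Lemma~\ref{lem:coercivity} then reduces the task to establishing $|A(\b{\eta},\b{\xi}_h)|\le C\Vert\b{z}-\b{\psi}_h\Vert_\ast\,\vvvert\b{\xi}_h\vvvert$.

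The convection--plus--reaction contribution is handled directly by Lemma~\ref{lem:convection-bound}. For $\mathcal{S}(\b{\eta},\b{\xi}_h)$, a Cauchy--Schwarz inequality for the positive semidefinite form $\mathcal{S}$, combined with $\mathcal{S}(\b{\xi}_h,\b{\xi}_h)\le\vvvert\b{\xi}_h\vvvert^2$ and $\mathcal{S}(\b{\eta},\b{\eta})\le\vvvert\mr{curl}(\b{z}-\b{\psi}_h)\vvvert^2\le\Vert\b{z}-\b{\psi}_h\Vert_\ast^2$ (both immediate from the definitions of $\vvvert\cdot\vvvert$ and $\Vert\cdot\Vert_\ast$), delivers the required bound. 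For $\nu\mathcal{D}_h(\b{\eta},\b{\xi}_h)$ I would split into its four ingredients (bulk, two consistency terms, and penalty), apply Cauchy--Schwarz so that the $\b{\xi}_h$-factor is absorbed into $\vvvert\b{\xi}_h\vvvert$, and then rewrite the surviving $\b{\eta}$-factors via $\b{\eta}=\mr{curl}(\b{z}-\b{\psi}_h)$ and the trace inequality \eqref{eq:trace-inequality}; this reduces everything to elementwise norms of $D^s(\b{z}-\b{\psi}_h)$ for $1\le s\le 3$, each of which is dominated by the corresponding $(\nu+h)h^{2s-4}$-weighted summand in $\Vert\b{z}-\b{\psi}_h\Vert_\ast$.

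Combining the three estimates yields $\vvvert\b{\xi}_h\vvvert\le C\Vert\b{z}-\b{\psi}_h\Vert_\ast$, and the triangle inequality $\vvvert\b{u}-\b{u}_h\vvvert\le\vvvert\b{\eta}\vvvert+\vvvert\b{\xi}_h\vvvert$ together with $\vvvert\b{\eta}\vvvert=\vvvert\mr{curl}(\b{z}-\b{\psi}_h)\vvvert\le\Vert\b{z}-\b{\psi}_h\Vert_\ast$ closes the proof. The main technical obstacle will be the diffusion bound, where the bookkeeping of derivative orders, powers of $h$, and factors of $\nu$ must be done carefully enough to confirm that the bespoke $(\nu+h)h^{2s-4}$ weights of $\Vert\cdot\Vert_\ast$ dominate both the bulk and the trace-generated facet contributions. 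Crucially, the exactness of the discrete complex in Assumption~\ref{assumption:Vh+Qh} is what forces $\b{\xi}_h$ to be pointwise divergence-free, thereby killing the pressure contribution in the consistency identity and producing an estimate that does not involve any inverse power of the discrete inf-sup constant.
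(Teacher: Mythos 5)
Your proposal is correct and follows essentially the same route as the paper: coercivity plus strong consistency, with the pressure term annihilated because $\b{\xi}_h\in V_{h,\mr{div}}$ is pointwise divergence-free, Lemma~\ref{lem:convection-bound} for the convection/reaction part, a Cauchy--Schwarz bound $\mathcal{S}(\b{\eta},\b{\xi}_h)\le\mathcal{S}(\b{\eta},\b{\eta})^{1/2}\mathcal{S}(\b{\xi}_h,\b{\xi}_h)^{1/2}$ for the stabilization, and trace inequalities reducing $\nu\mathcal{D}_h(\b{\eta},\b{\xi}_h)$ to the $(\nu+h)h^{2s-4}$-weighted derivative sums in $\Vert\cdot\Vert_\ast$. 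Your derivative bookkeeping (shifting $D^i\b{\eta}$ to $D^{i+1}(\b{z}-\b{\psi}_h)$) and the concluding triangle inequality match the paper's argument.
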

\begin{proof}
From \eqref{eq:coercivity}, \eqref{eq:consistency} and the first equation in \eqref{eq:Hdiv-vorticity-stabilization}, we obtain 
\begin{equation}\label{eq:frame-for-uI-uh}
\begin{aligned}
\frac{1}{2}\vvvert \b{\xi}_h \vvvert^2\le& A(\b{\xi}_h,\b{\xi}_h)=A(\b{u}-\b{u}_h,\b{\xi}_h)-A(\b{\eta},\b{\xi}_h)\\
=&-\left( \nu \mathcal{D}_h(\b{\eta},\b{\xi}_h)+\mathcal{C}_h(\b{\eta},\b{\xi}_h)+\mathcal{R}(\b{\eta},\b{\xi}_h)+  \mathcal{S}(\b{\eta},\b{\xi}_h) \right).
\end{aligned}
\end{equation}
From \eqref{eq:consistency}, we have
$$
A(\b{u}-\b{u}_h,\b{\xi}_h)=-\mathcal{P}(\b{\xi}_h,p-p_h)=0,
$$
 since $\nabla \cdot \b{\xi}_h=0$, which follows from $\b{\xi}_h\in V_{h,\mr{div}}$.
 
Cauchy-Schwartz inequality and the trace inequality \eqref{eq:trace-inequality} for $\b{\eta}$ yield 
 \begin{equation*}
\begin{aligned}
&|\nu \mathcal{D}_h(\b{\eta},\b{\xi}_h)|\\
\le &  
\nu^{1/2}\Vert \nabla \b{\eta} \Vert\; \nu^{1/2}\Vert \nabla \b{\xi}_h \Vert
+\left(\sum_{F\in\mathcal{F}_h}\sigma^{-1} \nu  h_F \langle  \{ \nabla \b{\eta} \},\{ \nabla \b{\eta} \}\rangle_F\right)^{1/2} \left(\sum_{F\in\mathcal{F}_h}\sigma \nu  h_F^{-1}\langle  [\b{\xi}_h],[\b{\xi}_h]\rangle_F\right)^{1/2}\\
&+\left(\sum_{F\in\mathcal{F}_h} \nu  h_F^{-1} \langle  [ \b{\eta}], [\b{\eta}]\rangle_F\right)^{1/2} \left(\sum_{F\in\mathcal{F}_h}  \nu  h_F\langle \{ \nabla \b{\xi}_h \},\{ \nabla \b{\xi}_h \}\rangle_F\right)^{1/2}\\
&+\left(\sum_{F\in\mathcal{F}_h}\sigma \nu  h_F^{-1}\langle  [\b{\eta}],[\b{\eta}]\rangle_F\right)^{1/2} \left(\sum_{F\in\mathcal{F}_h}\sigma \nu  h_F^{-1}\langle  [\b{\xi}_h],[\b{\xi}_h]\rangle_F\right)^{1/2}\\
\le &
C\left(\nu  \sum_{i=0}^2h^{-2+2i}\Vert D^i \b{\eta} \Vert^2_h \right)^{1/2}\vvvert \b{\xi}_h \vvvert\\
\le &
 C \Vert \b{z}-\b{\psi}_h \Vert_*\vvvert \b{\xi}_h \vvvert.
\end{aligned}
\end{equation*}
From \eqref{eq:norm}, one has
\begin{align*}
|\mathcal{S}(\b{\eta},\b{\xi}_h)|
\le  \mathcal{S}(\b{\eta},\b{\eta})^{1/2} \mathcal{S}(\b{\xi}_h,\b{\xi}_h)^{1/2} 
\le  C \Vert \b{z}-\b{\psi}_h \Vert_* \vvvert \b{\xi}_h \vvvert.
\end{align*}
Combining the above estimations and Lemma \ref{lem:convection-bound} with \eqref{eq:frame-for-uI-uh}, we are done.
\end{proof}
Similar to the conclusion in \cite{MR4331937}, the accuracy of the solution's approximation depends solely on how well the space $Z_h$ approximates the space 
$Z$.  Furthermore, we obtain the following result from Lemma \ref{lem:Z-Zh}.  
\begin{corollary}\label{the:u-uh}
In addition  to the assumptions of Theorem \ref{the:uh-wh}, suppose that
$\b{u}\in H^1_0(\Omega)^d\cap H^{k+1}(\Omega)^d$. Then, there exists a constant $C>0$, independent of $h$ and $\nu$ , such that
$$
\vvvert \b{u}-\b{u}_h \vvvert\le C   h^k(h^{1/2}+\nu^{1/2})\Vert \b{u} \Vert_{k+1}.
$$
\end{corollary}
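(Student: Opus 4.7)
The plan is to apply Theorem \ref{the:uh-wh}, which reduces the task to producing a convenient $\b{\psi}_h\in Z_h$ and bounding $\Vert \b{z}-\b{\psi}_h\Vert_*$. Since $\mr{div}\,\b{u}=0$ and $\b{u}\in H^{k+1}(\Omega)^d$, Theorem \ref{the:potential-of-u} supplies a potential $\b{z}\in Z\cap H^{k+2}(\Omega)$ with $\mr{curl}\,\b{z}=\b{u}$ and $\Vert\b{z}\Vert_{k+2}\le C\Vert\b{u}\Vert_{k+1}$. I would take $\b{\psi}_h$ as the optimal local polynomial approximant of $\b{z}$ in $Z_h$, so that Lemma \ref{lem:Z-Zh} yields $\Vert\partial^\alpha(\b{z}-\b{\psi}_h)\Vert_h\le Ch^{k+2-|\alpha|}|\b{z}|_{k+2}$ for every $|\alpha|\le k+1$. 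The standing hypotheses $k\ge 2$ in 2D and $k\ge 3$ in 3D guarantee that this range contains $|\alpha|=3$, which is the highest derivative order appearing in $\Vert\cdot\Vert_*$. The remaining work is to show $\Vert\b{z}-\b{\psi}_h\Vert_*^2\le C(\nu+h)h^{2k}\Vert\b{u}\Vert_{k+1}^2$.

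The second block of $\Vert\cdot\Vert_*$, namely $(\nu+h)\sum_{s=0}^3 h^{2s-4}\Vert D^s(\b{z}-\b{\psi}_h)\Vert_h^2$, is immediate: each summand collapses to $(\nu+h)h^{2k}|\b{z}|_{k+2}^2$ after inserting Lemma \ref{lem:Z-Zh}. For the first block $\vvvert\mr{curl}(\b{z}-\b{\psi}_h)\vvvert^2$, setting $\b{\eta}:=\mr{curl}(\b{z}-\b{\psi}_h)$ and expanding according to \eqref{eq:norm}, I would bound the diffusion piece $\nu\Vert\nabla\b{\eta}\Vert_h^2$ via $\Vert D^2(\b{z}-\b{\psi}_h)\Vert_h$; the symmetric and upwind jump penalties $\sigma\nu h_F^{-1}\Vert[\b{\eta}]\Vert_{0,F}^2$ and $\langle|\b{b}\cdot\b{n}_F|[\b{\eta}],[\b{\eta}]\rangle_F$ by passing from facets to elements through the trace inequality \eqref{eq:trace-inequality}, as in \eqref{eq:F-to-K}; and the $L^2$ piece $r_0\Vert\b{\eta}\Vert_0^2$ as a strictly higher-order contribution. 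Each of these terms is at most $C(\nu+h)h^{2k}\Vert\b{u}\Vert_{k+1}^2$.

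The main obstacle is the vorticity stabilization $\mathcal{S}(\b{\eta},\b{\eta})$: the volumetric summand $\tau_K\Vert\mr{curl}\,\mathcal{L}\b{\eta}\Vert_{0,K}^2$ involves up to four derivatives of $\b{z}-\b{\psi}_h$, which matches or slightly exceeds the approximation range of Lemma \ref{lem:Z-Zh}. My plan is to split $\mathcal{L}\b{\eta}$ into its diffusive, convective, and reactive parts and distinguish the two regimes of $\tau_K$: when $\Vert\b{b}\Vert_{0,\infty}h_K\ge\nu$ one has $\tau_K=h_K^3/\Vert\b{b}\Vert_{0,\infty}$; otherwise $\tau_K=h_K^4/\nu$. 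In both regimes the smallness of $\tau_K$ compensates for the extra derivative. For the critical contribution $\tau_K\nu^2\Vert\mr{curl}\,\Delta\b{\eta}\Vert_{0,K}^2$, when the required derivative order exceeds $k+1$ I would exploit that $\b{\psi}_h$ restricts to a polynomial of degree $k+1$ on each $K$ by \eqref{eq:order-Zh}, so that $D^{k+2}\b{\psi}_h\equiv 0$ on $K$ and $D^{k+2}(\b{z}-\b{\psi}_h)=D^{k+2}\b{z}\in L^2$. The facet contribution $h^2\Vert[(\b{b}\cdot\nabla)\b{\eta}\times\b{n}]\Vert_{0,F}^2$ is then reduced to a volume quantity through \eqref{eq:trace-inequality} analogously. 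Summing all pieces produces the bound $\Vert\b{z}-\b{\psi}_h\Vert_*^2\le C(\nu+h)h^{2k}\Vert\b{u}\Vert_{k+1}^2$, whence Theorem \ref{the:uh-wh} combined with the stability estimate $\Vert\b{z}\Vert_{k+2}\le C\Vert\b{u}\Vert_{k+1}$ completes the proof.
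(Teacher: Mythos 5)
Your argument follows exactly the route the paper intends (the paper in fact leaves the proof implicit, stating only that the corollary follows from Theorem \ref{the:uh-wh} together with Lemma \ref{lem:Z-Zh} and the stability bound $\Vert \b{z}\Vert_{k+2}\le C\Vert\b{u}\Vert_{k+1}$ from Theorem \ref{the:potential-of-u}), and your term-by-term verification of $\Vert\b{z}-\b{\psi}_h\Vert_*\le Ch^k(h^{1/2}+\nu^{1/2})\vert\b{z}\vert_{k+2}$, including the treatment of the fourth-derivative contribution in $\mathcal{S}$ via the smallness of $\tau_K$, is sound. The only cosmetic caveat is that \eqref{eq:order-Zh} gives $\mathbb{P}_{k+1}(K)\subset Z_h|_K$ rather than equality, so the step $D^{k+2}\b{\psi}_h\equiv 0$ requires choosing $\b{\psi}_h$ elementwise in the $\mathbb{P}_{k+1}$ part (or, equivalently, an inverse estimate), which does not affect the conclusion.
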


\begin{remark}
The velocity error is pressure-robust, independent of pressure approximation. In the construction of pressure-robust finite elements, $H(\mr{div})$-conforming elements--- particularly  RT  and  BDM  elements---have been widely adopted (see \cite{Bof1Bre2For3:2013-Mixed,MR2657851,MR2860674,MR3780790,MR4098216,MR4410752}). The popularity of these elements stems largely from their satisfaction of the commuting diagram property, which facilitates straightforward derivation of pressure-robust optimal error estimates for the velocity field. In contrast, another $H(\mr{div})$--conforming element---the Stenberg element---though featuring fewer degrees of freedom, has received comparatively less attention due to its failure to satisfy the commuting diagram property. Leveraging the property that Stenberg elements form an exact finite element subcomplex, this work establishes for the first time pressure-robust optimal velocity error estimates, thereby paving the way for further investigation of these elements. 
\end{remark}

\begin{remark}\label{rem:k+1/2-discussion}
When $\nu\le Ch$,  the velocity error attains the optimal convergence order of $k + \frac{1}{2}$ in convection-dominated regimes. This result is consistent with the sharpness bounds established in prior works: for RT and BDM elements in \cite{MR4098216}, and for Scott-Vogelius element in \cite{MR4331937}. For $H(\mr{div})$-conforming finite element methods of convection-dominated Oseen problems, particularly when using RT  and BDM elements,  the $k+1/2$-order convergence of the velocity field using $k$-th order elements relies crucially on three key factors: the commuting diagram property,  the special relationship between BDM and RT elements, and the orthogonal properties of the RT interpolation operator. In contrast, for other $H(\mr{div})$ elements such as Stenberg elements which generally lack these   properties, the analogous analytical techniques become inapplicable for achieving such convergence rates. In this work, we establish the $k+\frac{1}{2}$-order convergence through two key components: exact finite element subcomplexes and  vorticity-based stabilization. This approach is not limited to specific element choices and can be naturally extended to a wide range of finite element pairs.  
\end{remark}

\subsection{An error estimate for the pressure}
Denote by $\pi_h : L^2_0(\Omega ) \rightarrow  Q_h$ the $L^2(\Omega )$ orthogonal projection onto $Q_h$.
\begin{theorem}\label{the:p-error} 
Let $(\b{u},p)\in \b{H}^1_0(\Omega)^d\times L^2_0(\Omega)$ solve \eqref{eq:Oseen-equations} and
 $(\b{u}_h,p_h)\in V_h\times Q_h$ solve \eqref{eq:Hdiv-vorticity-stabilization}. Let   $\b{z}$ satisfy \eqref{eq:u-to-z}.  Suppose Assumption \ref{assumption:Vh+Qh}  and \ref{assum:discrete-inf-sup-Vh-Qh} hold. Then the following error estimate holds:  For any  $\b{\psi}_h\in Z_h$ and $\b{w}_h=\mr{curl} \b{\psi}_h\in V_{h,\mr{div}}$,
$$
\Vert \pi_h p-p_h \Vert_0\le C  \left( \vvvert \b{u}-\b{u}_h\vvvert+\vvvert \b{u}_h-  \b{w}_h\vvvert
+(h^{1/2}+\nu h^{-1/2}) \Vert \b{z}- \b{\psi}_h \Vert_*
\right).
$$
\end{theorem}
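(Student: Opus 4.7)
The strategy is to reduce the $L^2$-norm of the pressure error to an estimate of $A(\b{u}-\b{u}_h,\b{v}_h)$ via the discrete inf-sup condition, and then to bound the latter using the velocity-error decomposition $\b{u}-\b{u}_h=\b{\eta}-\b{\xi}_h$. Since $\pi_h p-p_h\in Q_h$, Assumption~\ref{assum:discrete-inf-sup-Vh-Qh} gives
$$
\beta_{is}^h\Vert\pi_h p-p_h\Vert_0\le\sup_{\b{v}_h\in V_h\setminus\{\b{0}\}}\frac{\mathcal{P}(\b{v}_h,\pi_h p-p_h)}{\Vert\b{v}_h\Vert_{1,h}}.
$$
The exactness $\mr{div}\,V_h\subset Q_h$ (Remark~\ref{rem:exactness-div}) combined with the $L^2$-orthogonality of $\pi_h$ yields $\mathcal{P}(\b{v}_h,p-\pi_h p)=-(\nabla\cdot\b{v}_h,p-\pi_h p)=0$, and consistency~\eqref{eq:consistency} collapses the numerator into $\mathcal{P}(\b{v}_h,\pi_h p-p_h)=-A(\b{u}-\b{u}_h,\b{v}_h)$.

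The heart of the proof is to establish
$$
|A(\b{u}-\b{u}_h,\b{v}_h)|\le C\bigl(\vvvert\b{u}-\b{u}_h\vvvert+\vvvert\b{u}_h-\b{w}_h\vvvert+(h^{1/2}+\nu h^{-1/2})\Vert\b{z}-\b{\psi}_h\Vert_*\bigr)\Vert\b{v}_h\Vert_{1,h}.
$$
For the stabilization piece, Cauchy-Schwarz yields $|\mathcal{S}(\b{u}-\b{u}_h,\b{v}_h)|\le\vvvert\b{u}-\b{u}_h\vvvert\cdot\mathcal{S}(\b{v}_h,\b{v}_h)^{1/2}\le C\vvvert\b{u}-\b{u}_h\vvvert\Vert\b{v}_h\Vert_{1,h}$ via inverse inequalities controlling $\mr{curl}\,\mathcal{L}\b{v}_h$. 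For the remaining pieces we split $\b{u}-\b{u}_h=\b{\eta}-\b{\xi}_h$: the purely discrete contribution $A(\b{\xi}_h,\b{v}_h)$ is controlled by standard DG boundedness, yielding $C\vvvert\b{\xi}_h\vvvert\Vert\b{v}_h\Vert_{1,h}$. The terms $\nu\mathcal{D}_h(\b{\eta},\b{v}_h)$ and $\mathcal{R}(\b{\eta},\b{v}_h)$ are bounded via Cauchy-Schwarz and the trace inequality~\eqref{eq:trace-inequality} together with the weighted estimates $\Vert D^s(\b{z}-\b{\psi}_h)\Vert_h\le Ch^{2-s}(\nu+h)^{-1/2}\Vert\b{z}-\b{\psi}_h\Vert_*$ implicit in the definition of $\Vert\cdot\Vert_*$. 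For the convection term $\mathcal{C}_h(\b{\eta},\b{v}_h)$ we integrate by parts once to move the derivative off the smooth factor: the volume contribution $-(\b{\eta},(\b{b}\cdot\nabla)\b{v}_h)_h$ is bounded by $\Vert\b{\eta}\Vert_0\Vert\nabla\b{v}_h\Vert_h\le Ch(\nu+h)^{-1/2}\Vert\b{z}-\b{\psi}_h\Vert_*\Vert\b{v}_h\Vert_{1,h}$, while the facet jumps are handled by a weighted Cauchy-Schwarz pairing $h^{1/2}\Vert\{\b{\eta}\}\Vert_F$ with $h^{-1/2}\Vert[\b{v}_h]\Vert_F$, the latter being controlled directly by $\Vert\b{v}_h\Vert_{1,h}$.

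The sharp factor $(h^{1/2}+\nu h^{-1/2})$ in front of $\Vert\b{z}-\b{\psi}_h\Vert_*$ emerges through the elementary inequalities $h(\nu+h)^{-1/2}\le h^{1/2}+\nu h^{-1/2}$ and $\nu(\nu+h)^{-1/2}\le h^{1/2}+\nu h^{-1/2}$ (both equivalent to $h^{3}\le(\nu+h)^{3}$ and $\nu^{2}h\le(\nu+h)^{3}$, respectively) applied to each of the weighted bounds above. The principal obstacle will be the careful bookkeeping in the convection term: one must reweight the Cauchy-Schwarz inequalities used in Lemma~\ref{lem:convection-bound} so that every test-function factor becomes $\Vert\b{v}_h\Vert_{1,h}$ rather than $\vvvert\b{v}_h\vvvert$ (since the norm equivalence~\eqref{eq:equivalence-triple-H1} between them is only $O(1)$ and would destroy the sharp scaling), and one must verify that the specific stabilization scaling $\tau_K=\min\{h_K^3/\Vert\b{b}\Vert_{0,\infty},h_K^4/\nu\}$ is precisely what makes the corresponding coefficient collapse to $h^{1/2}+\nu h^{-1/2}$ rather than a coarser $O(1)$ constant.
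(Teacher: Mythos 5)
Your proposal is correct and follows essentially the same route as the paper: reduce $\Vert \pi_h p-p_h\Vert_0$ via the discrete inf-sup condition and the orthogonality $(\nabla\cdot\b{v}_h,\,p-\pi_h p)=0$ to a bound on $A(\b{u}-\b{u}_h,\b{v}_h)$ against $\Vert\b{v}_h\Vert_{1,h}$, then estimate the diffusion, convection/reaction (after integration by parts), and stabilization terms using the trace and inverse inequalities together with the weights built into $\Vert\cdot\Vert_*$. The only cosmetic differences are that the paper realizes the inf-sup condition with an explicit $\b{v}^*_h$ satisfying $\nabla\cdot\b{v}^*_h=\pi_h p-p_h$ and keeps $\b{e}=\b{u}-\b{u}_h$ intact, splitting into $\b{u}-\mathrm{curl}\,\b{\psi}_h$ and $\b{u}_h-\b{w}_h$ only where second derivatives of $\b{e}$ must be controlled, whereas you split $A(\b{\eta},\cdot)-A(\b{\xi}_h,\cdot)$ from the outset — both yield the same three terms on the right-hand side.
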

\begin{proof}
From \eqref{eq:discrete-inf-sup-2}, for $\pi_h p-p_h\in Q_h$, we can find 
$\b{v}^*_h\in V_h$ satisfying
\begin{equation}\label{eq:vh*}
\begin{aligned}
&\nabla \cdot \b{v}^*_h=\pi_h p-p_h,\quad \text{since $\mr{div}V_h\subset Q_h$},\\
&\Vert \b{v}^*_h \Vert_{1,h}\le  (\beta^h_{is})^{-1} \Vert \pi_h p-p_h \Vert_0.
\end{aligned}
\end{equation}
Combining this with the Poincar\'{e}-Friedrichs inequality \cite[(1.3)]{MR1974504}, we derive 
\begin{equation}\label{eq:function}
\Vert \b{v}^*_h \Vert^2_0\le C\left( \Vert \nabla \b{v}^*_h \Vert^2_h +\sum_{F\in\mathcal{F}_h}h^{-1}_F \Vert [\b{v}^*_h] \Vert^2_F\right)
\le 
C  \Vert \pi_h p-p_h \Vert^2_0.
\end{equation}

Let $\b{e}:=\b{u}-\b{u}_h$.  Then from \eqref{eq:vh*} and \eqref{eq:consistency}, one has
\begin{equation}\label{eq:pressure-0}
\begin{aligned}
\Vert \pi_h p-p_h \Vert^2_0=&(\mr{div} \b{v}^*_h, \pi_h p-p_h)
=\underbrace{ (\nabla \cdot \b{v}^*_h,\pi_h p-p) }_{=0,\; \text{since $\mr{div}V_h\subset Q_h$  }}-A(\b{e},\b{v}^*_h)\\
=&-( \nu \mathcal{D}_h(\b{e},\b{v}^*_h)+\mathcal{C}_h(\b{e},\b{v}^*_h)+\mathcal{R}(\b{e},\b{v}^*_h)+  \mathcal{S}(\b{e},\b{v}^*_h) ).
\end{aligned}
\end{equation}
We now analyze each term on the right-hand side in detail.

First, applying the Cauchy-Schwarz inequality along with \eqref{eq:vh*} gives
\begin{align*}
 (\nabla \b{e},\nabla \b{v}^*_h)_h\le &\Vert \nabla \b{e} \Vert_h  \; \Vert \nabla \b{v}^*_h \Vert_h 
 \le 
 C \nu^{-1/2} \vvvert \b{e}\vvvert \; \Vert \pi_h p-p_h \Vert_0,
 \end{align*}
 and
\begin{align*}
\sum_{F\in\mathcal{F}_h}\frac{\sigma}{h_F}\langle  [\b{e}],[\b{v}^*_h]\rangle_F
\le &  \left(\sum_{F\in\mathcal{F}_h}\sigma^2   h_F^{-1}\langle  [\b{e}],[\b{e}]\rangle_F\right)^{1/2} \left(\sum_{F\in\mathcal{F}_h}  h_F^{-1}\langle  [\b{v}^*_h],[\b{v}^*_h]\rangle_F\right)^{1/2} \\
 \le &
 C\nu^{-1/2} \vvvert \b{e}\vvvert \Vert \pi_h p-p_h \Vert_0.
 \end{align*}
Furthermore,  for any $\b{\psi}_h\in Z_h$ with $\b{w}_h=\mr{curl} \b{\psi}_h$,  the Cauchy-Schwarz inequality, trace inequality \eqref{eq:trace-inequality}, triangle inequality and inverse inequality  imply
 \begin{align*}
&\sum_{F\in\mathcal{F}_h}\langle \{ \nabla \b{e} \}\b{n}_F, [\b{v}^*_h]\rangle_F
\le  C\left( \sum_{F\in\mathcal{F}_h} h_F \langle  \{ \nabla \b{e} \},\{ \nabla \b{e} \}\rangle_F\right)^{1/2}  \left(\sum_{F\in\mathcal{F}_h}   h_F^{-1}\langle  [\b{v}^*_h],[\b{v}^*_h]\rangle_F\right)^{1/2}\\
\le & C\left( \sum_{F\in\mathcal{F}_h}  h_F   \sum_{K\in K_F} \left( h^{-1}_K \Vert \nabla \b{e} \Vert_{0,K}^2
+h_K \Vert D^2 \b{e} \Vert_{0,K}^2\right)  \right)^{1/2}  \Vert \pi_h p-p_h \Vert_0\\
\le & C\left(\Vert \nabla \b{e} \Vert^2_h +h^2 \Vert D^2 (\b{u}-\mr{curl} \b{\psi}_h) \Vert^2_h + h^2 \Vert D^2 (\b{u}_h-\b{w}_h) \Vert^2_h   \right)^{1/2}\Vert \pi_h p-p_h \Vert_0\\
\le & C\left(\Vert \nabla\b{e} \Vert^2_h +h^{-1} \Vert \b{z}- \b{\psi}_h \Vert^2_*+\Vert \nabla (\b{u}_h-  \b{w}_h) \Vert^2_h  \right)^{1/2}\Vert \pi_h p-p_h \Vert_0\\
\le & C\left(\nu^{-1/2} \vvvert \b{e}\vvvert
+h^{-1/2} \Vert \b{z}- \b{\psi}_h \Vert_*+\nu^{-1/2} \vvvert \b{u}_h-  \b{w}_h\vvvert\right) \Vert \pi_h p-p_h \Vert_0,
 \end{align*}
where $K_F$ has been defined in \eqref{eq:F-to-K}.  Using the Cauchy-Schwarz inequality and the discrete trace inequality \eqref{eq:discrete-trace-inequality}, we obtain
 \begin{align*}
\sum_{F\in\mathcal{F}_h}\langle  [\b{e}],\{ \nabla \b{v}^*_h \}\b{n}_F\rangle_F \le &
C\left(\sum_{F\in\mathcal{F}_h} h_F^{-1} \langle  [ \b{e}], [\b{e}]\rangle_F\right)^{1/2} \left(\sum_{F\in\mathcal{F}_h}    h_F\langle \{ \nabla \b{v}^*_h \},\{ \nabla \b{v}^*_h \}\rangle_F\right)^{1/2}\\
\le & 
C\nu^{-1/2} \vvvert \b{e}\vvvert \Vert  \nabla \b{v}^*_h \Vert_h 
\le 
C\nu^{-1/2} \vvvert \b{e}\vvvert \Vert  \pi_h p-p_h  \Vert_0.
\end{align*}
Thus, we conclude
\begin{equation}\label{eq:pressure-1}
\nu |\mathcal{D}_h( \b{e},\b{v}^*_h)|\le C  (\nu^{1/2} \vvvert \b{e}\vvvert+\nu^{1/2} \vvvert \b{u}_h-  \b{w}_h\vvvert
+\nu h^{-1/2} \Vert \b{z}- \b{\psi}_h \Vert_*)\; \Vert \pi_h p-p_h \Vert_0.
\end{equation}

Integration by parts leads to
\begin{align*}
&\mathcal{C}_h(\b{e},\b{v}^*_h)+\mathcal{R}(\b{e},\b{v}^*_h)\\ 
=&\sum_{F\in\mathcal{F}_h}\int_{F}\b{b}\cdot \b{n}_F \{ \b{e} \}\cdot [ \b{v}^*_h ]\mr{d}s-( \b{e},(\b{b}\cdot \nabla)\b{v}^*_h )_h\\
&+\sum_{F\in\mathcal{F}_h}\gamma^c_F\langle  |\b{b}\cdot \b{n}_F|[\b{e}], [\b{v}^*_h ]\rangle_F+( (c-\nabla\cdot \b{b}) \b{e},\b{v}^*_h)\\
=:&\mathscr{P}_1+\mathscr{P}_2+\mathscr{P}_3+\mathscr{P}_4.
\end{align*}
We now estimate each term individually. 
By the trace inequality \eqref{eq:trace-inequality}, triangle inequality  and inverse inequality, we have
\begin{align*}
&\sum_{F\in\mathcal{F}_h}\langle \{\b{e}\}, \{\b{e}\}\rangle_F
\le  C \sum_{ K\in\mathcal{T}_h }( h^{-1}_K\Vert \b{e} \Vert^2_{0,K}+h_K\Vert \nabla \b{e} \Vert^2_{0,K})\\
 \le &
 C(h^{-1}\Vert \b{e} \Vert^2_0+h \Vert \nabla (\b{u}-\mr{curl} \b{\psi}_h) \Vert^2_h+
h\Vert \nabla (\b{w}_h-\b{u}_h)\Vert^2_h
 )\\
 \le &
 C \left(h^{-1} \Vert \b{e} \Vert^2_0+ \Vert \b{z}- \b{\psi}_h \Vert_*^2+
h^{-1}\Vert \b{w}_h-\b{u}_h\Vert^2_0 \right).
\end{align*}
Then Cauchy-Schwartz inequality and  stability estimate \eqref{eq:vh*} yield
\begin{align*}
|\mathscr{P}_1|\le & 
Ch^{1/2}\left(\sum_{F\in\mathcal{F}_h}\langle \{\b{e}\}, \{\b{e}\}\rangle_F \right)^{1/2}
 \left(\sum_{F\in\mathcal{F}_h}h^{-1}_F\langle  [\b{v}^*_h], [\b{v}^*_h ]\rangle_F \right)^{1/2}\\ 
 \le &
 C(\vvvert \b{e} \vvvert^2+h\Vert \b{z}- \b{\psi}_h \Vert^2_*+
\vvvert \b{u}_h- \b{w}_h \vvvert^2 )^{1/2}   \Vert \pi_h p-p_h \Vert_0.
\end{align*}
From H\"{o}lder's inequality  and the stability estimate \eqref{eq:vh*}, we deduce 
\begin{align*}
|\mathscr{P}_3|
\le &
Ch^{1/2} \left(\sum_{F\in\mathcal{F}_h}\gamma^c_F \langle  |\b{b}\cdot \b{n}_F|[\b{e}], [\b{e} ]\rangle_F \right)^{1/2}
 \left(\sum_{F\in\mathcal{F}_h}h^{-1}_F\langle  |[\b{v}^*_h], [\b{v}^*_h ]\rangle_F \right)^{1/2}\\
 \le &
Ch^{1/2} \vvvert \b{e} \vvvert \Vert \pi_h p-p_h \Vert_0.
\end{align*}
Additionally, the Cauchy-Schwarz inequality, along with stability estimates \eqref{eq:vh*} and \eqref{eq:function},   gives 
\begin{align*}
|\mathscr{P}_2|+|\mathscr{P}_4|
\le &
C\Vert \b{e} \Vert_0 \Vert \nabla \b{v}^*_h \Vert_h+C \Vert \b{e} \Vert_0 \Vert \b{v}^*_h \Vert_0
 \le 
 C\vvvert \b{e} \vvvert \Vert \pi_h p-p_h \Vert_0.
\end{align*}
Combining these estimates, we arrive at
\begin{equation}\label{eq:pressure-2}
|\mathcal{C}_h(\b{e},\b{v}^*_h)+\mathcal{R}(\b{e},\b{v}^*_h)|\le C  (\vvvert \b{e}\vvvert+\vvvert \b{u}_h-  \b{w}_h\vvvert
+h^{1/2} \Vert \b{z}- \b{\psi}_h \Vert_*)\; \Vert \pi_h p-p_h \Vert_0.
\end{equation}

 Finally, applying the inverse inequality,  discrete trace inequality \eqref{eq:discrete-trace-inequality},  and stability estimates \eqref{eq:vh*}  and  \eqref{eq:function}, we obtain
\begin{align*}
\mathcal{S}(\b{v}^*_h,\b{v}^*_h)\le & C(h^3\Vert \mr{curl}( \mathcal{L}\b{v}^*_h )\Vert^2_h+h^2\Vert (\b{b}\cdot \nabla ) \b{v}^*_h \Vert^2_{h,\mathcal{F}_h})\\
\le &
C(\nu^2 h^{-1}+h) \Vert \nabla \b{v}^*_h \Vert^2_h+Ch \Vert \b{v}^*_h \Vert^2_0 \\
\le & C (\nu^2 h^{-1}+h)\Vert \pi_h p-p_h \Vert_0^2.
\end{align*}
Thus,  we have
\begin{equation}\label{eq:pressure-3}
\mathcal{S}(\b{e},\b{v}^*_h)\le \mathcal{S}(\b{e},\b{e})^{1/2} \mathcal{S}(\b{v}^*_h,\b{v}^*_h)^{1/2}
\le  C\vvvert \b{e} \vvvert\; (\nu  h^{-1/2}+h^{1/2})\Vert \pi_h p-p_h \Vert_0.
\end{equation}
Substituting \eqref{eq:pressure-1}---\eqref{eq:pressure-3}  into \eqref{eq:pressure-0} completes the proof.
\end{proof}

\begin{remark} 
Theorem \ref{the:p-error} establishes that the error $\Vert \pi_h p - p_h \Vert_0$ is of the same order as the velocity error $\vvvert \b{u} - \b{u}_h \vvvert$ when  $\nu\le Ch^{1/2}$. 

By applying the triangle inequality, we obtain  
\begin{equation}\label{eq:p-ph}  
\Vert p - p_h \Vert_0 \leq \Vert p - \pi_h p \Vert_0 + \Vert \pi_h p - p_h \Vert_0.  
\end{equation}  
Combining this with Theorem \ref{the:p-error} and the standard approximation properties of $\pi_h$ (see, e.g., \cite{MR851383}), we derive an optimal 
 convergence rate for the pressure approximation, provided the exact pressure $p$ is sufficiently regular.  
\end{remark}

\begin{remark}
When employing the Stenberg space for velocity discretization paired with a pressure space of degree $k-1$, the theoretically achievable convergence rate for pressure approximation in the $L^2$-norm is $\mathcal{O}(h^k)$.   A notable enhancement occurs in the discrete error projection onto $Q_h$, yielding an $\mathcal{O}(h^{k+1/2})$ convergence rate. This represents a supercloseness property that exceeds standard approximation expectations for the given pressure polynomial space. 
\end{remark}

 \section{Numerical tests}
Here, we illustrate the theoretical results using the analytical solution from \cite[Example 4]{MR4331937},  defined as follows:
\begin{equation}\label{eq:Oseen-1}
\begin{aligned}
-\nu \Delta \b{u}+(\b{b}\cdot \nabla )\b{u}+c \b{u}+\nabla p =&\b{f}\quad \text{in $\Omega:=[0,1]^2$},\\
\nabla \cdot \b{u}=&0\quad \text{in $\Omega$},
\end{aligned}
\end{equation}
where $\b{b}=(b_1,b_2)^T=\b{u}+(0,1)^T$, $c=1$ and
\begin{align*}
\b{u}=&(u_1,u_2)^T=(\sin (2\pi x) \sin (2 \pi y), \cos (2\pi x) \cos (2 \pi y))^T,\\
p=&\frac{1}{4}(\cos (4 \pi x)-\cos (4 \pi y) ).
\end{align*}

All calculations are carried out
on nonuniform grids. For this aim, a sequence of shape-regular unstructured grids are generated. The coarsest grid is depicted in Figure \ref{fig:mesh level 1}.
We use  the package FreeFEM++ \cite{MR3043640} to implement the formulation \eqref{eq:Hdiv-vorticity-stabilization} with
 the Stenberg  finite element of order 2 (Stenberg2) and  piecewise affine, discontinuous  pressures. We select the jump penalization parameter as $\sigma = 6(k+1)(k+d)/d$ throughout our computations, adopting the asymptotic scaling with respect to the polynomial order $k\ge 1$ as recommended in \cite{Hillewaert:2013-Development}.
 Based on a parameter study, all simulations for convergences studies were
 performed with  $\delta_0= 10^{-5}$.  For other choices of the stabilization parameter $\delta_0$ (see log-log chart   \ref{fig:optimal stabilization coefficient}), the situation does not improve much, although
the optimum on coarse meshes seems to be slightly shifted toward larger
values. The linear systems were solved using
UMFPACK.

\begin{figure}[htbp]
\begin{minipage}[b]{0.48\linewidth}
\centering
\includegraphics[width=2.5in]{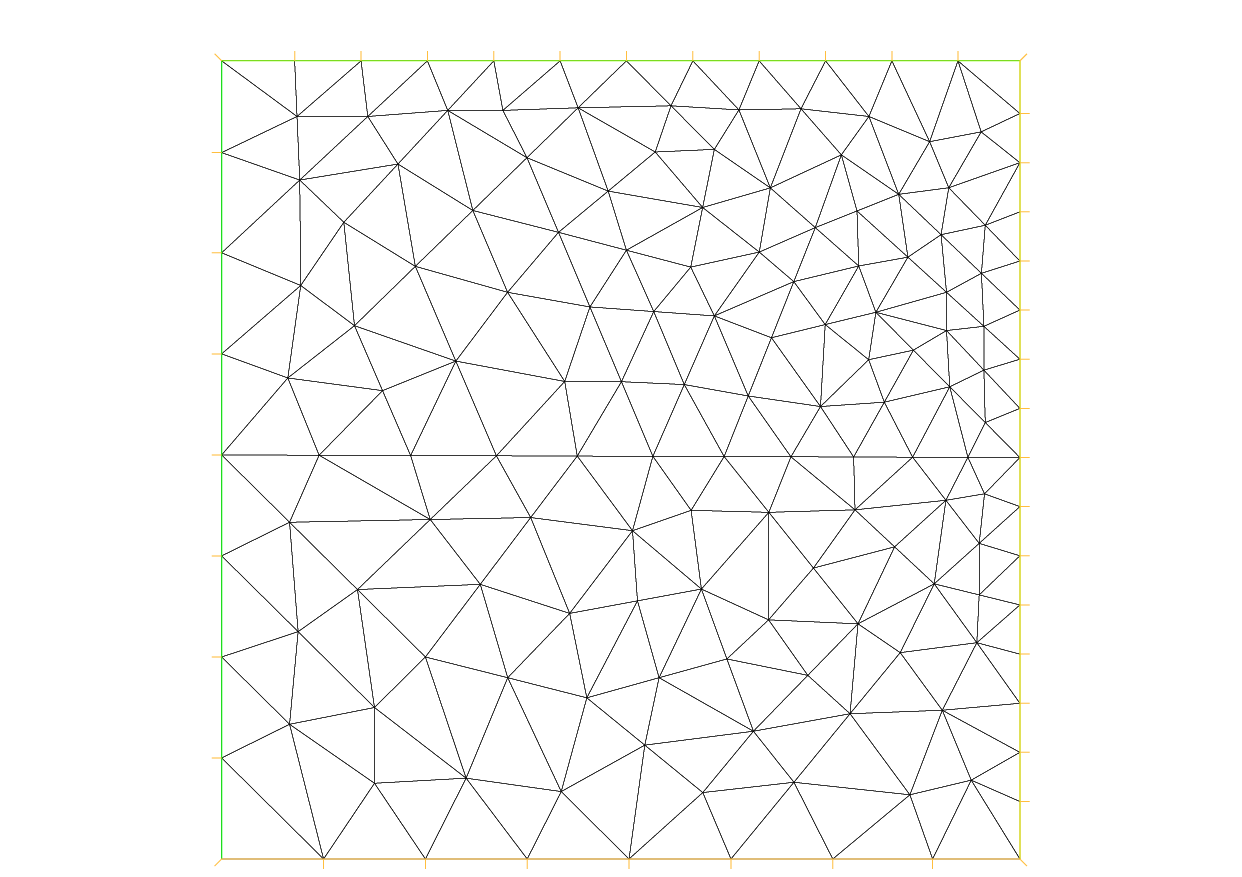}
\caption{Initial mesh level 1}
\label{fig:mesh level 1}
\end{minipage}%
\end{figure}

\begin{figure}
\centering
\includegraphics[width=4.5in]{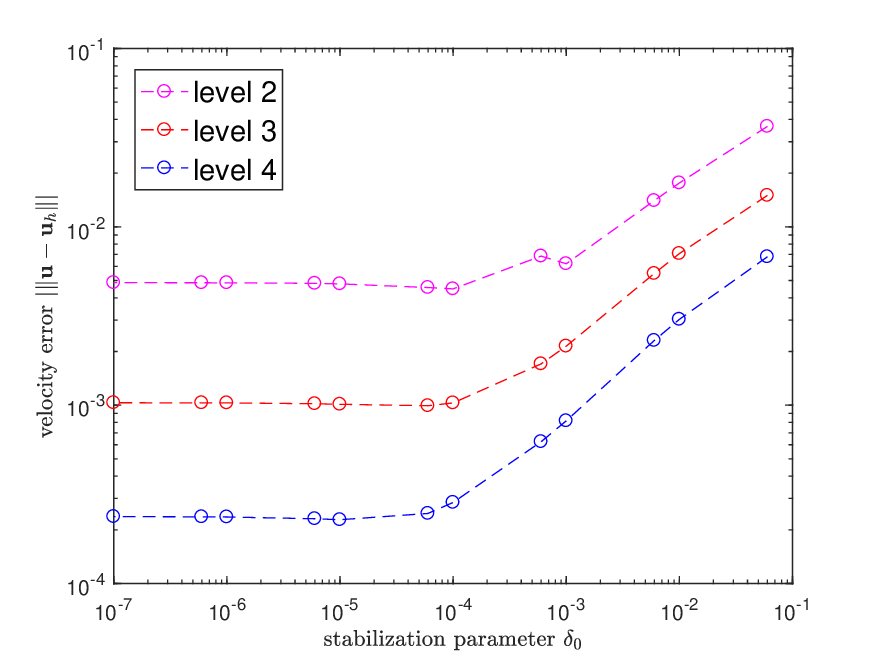}
\caption{ $\vvvert \b{u}-\b{u}_h \vvvert$ and stabilization coefficient $\delta_0$ when $\nu=10^{-6}$ }
\label{fig:optimal stabilization coefficient}
\end{figure}

\begin{figure}[htbp]
\begin{minipage}[b]{0.5\linewidth}
\centering
\includegraphics[width=2.5in]{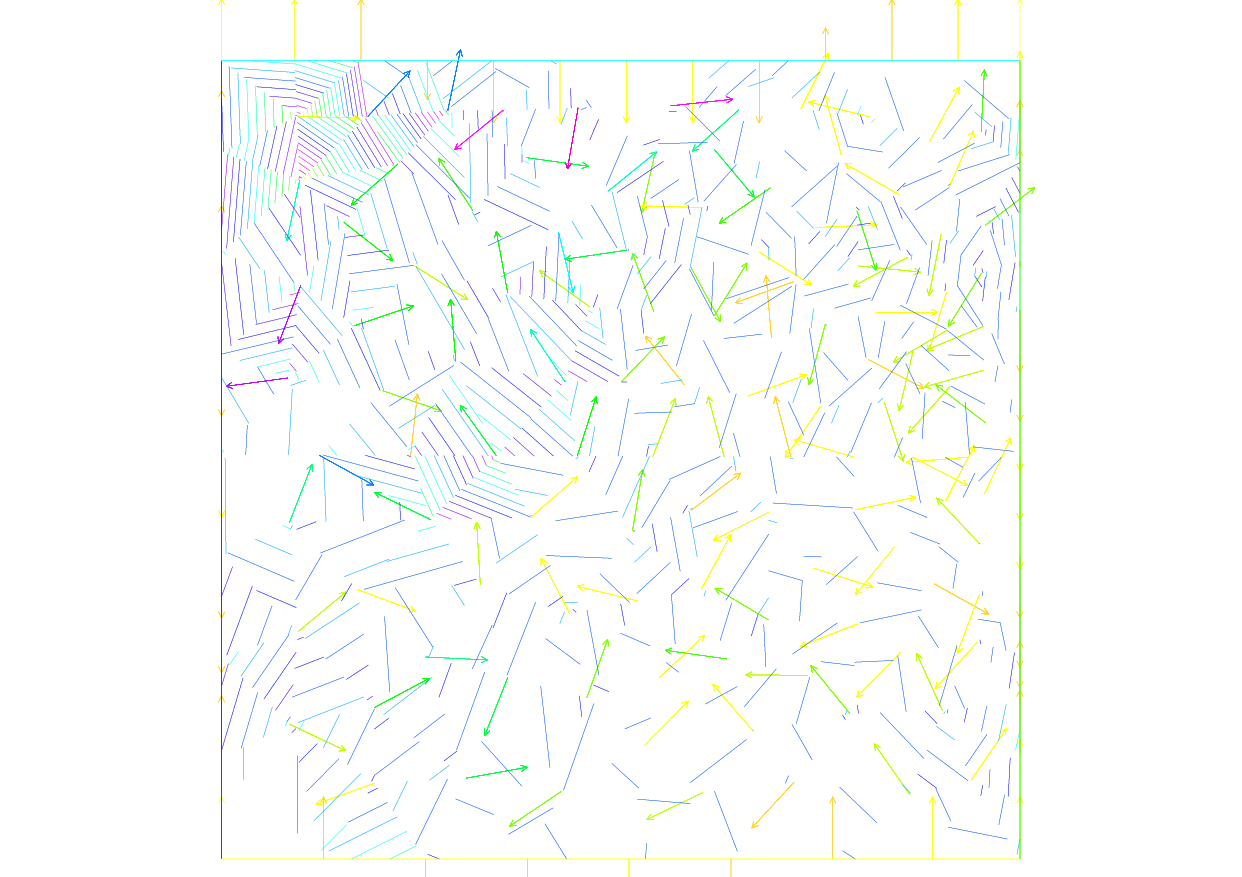}
 \captionof{figure}{vorticity stabilization.}
\label{fig:vs}
\end{minipage}
\hfill
\begin{minipage}[b]{0.5\linewidth}
\centering
\includegraphics[width=2.5in]{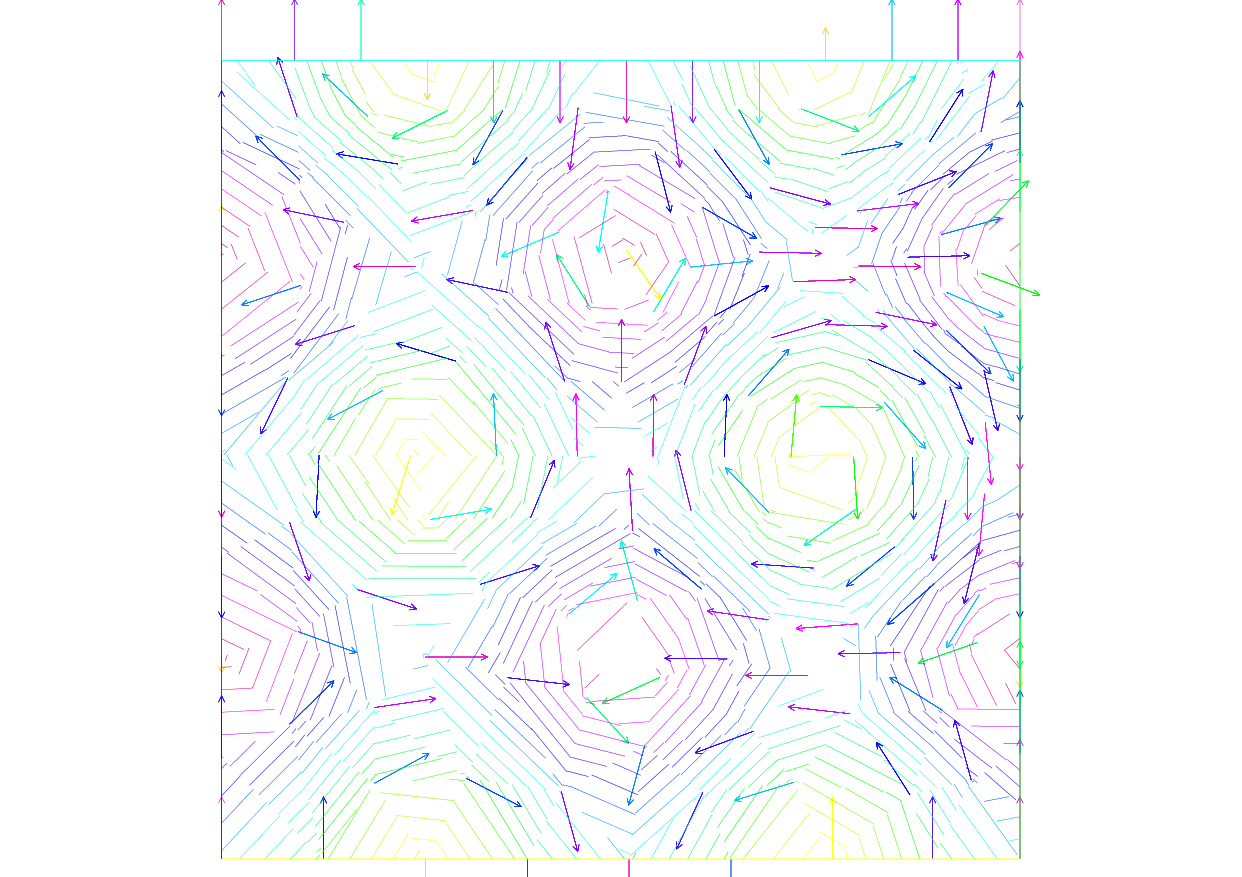}
 \captionof{figure}{upwind and vorticity stabilization.}
\label{fig:upw+vs}
\end{minipage}

%
\hfill
\begin{minipage}[b]{0.8\linewidth}
\centering
\includegraphics[width=3.5in]{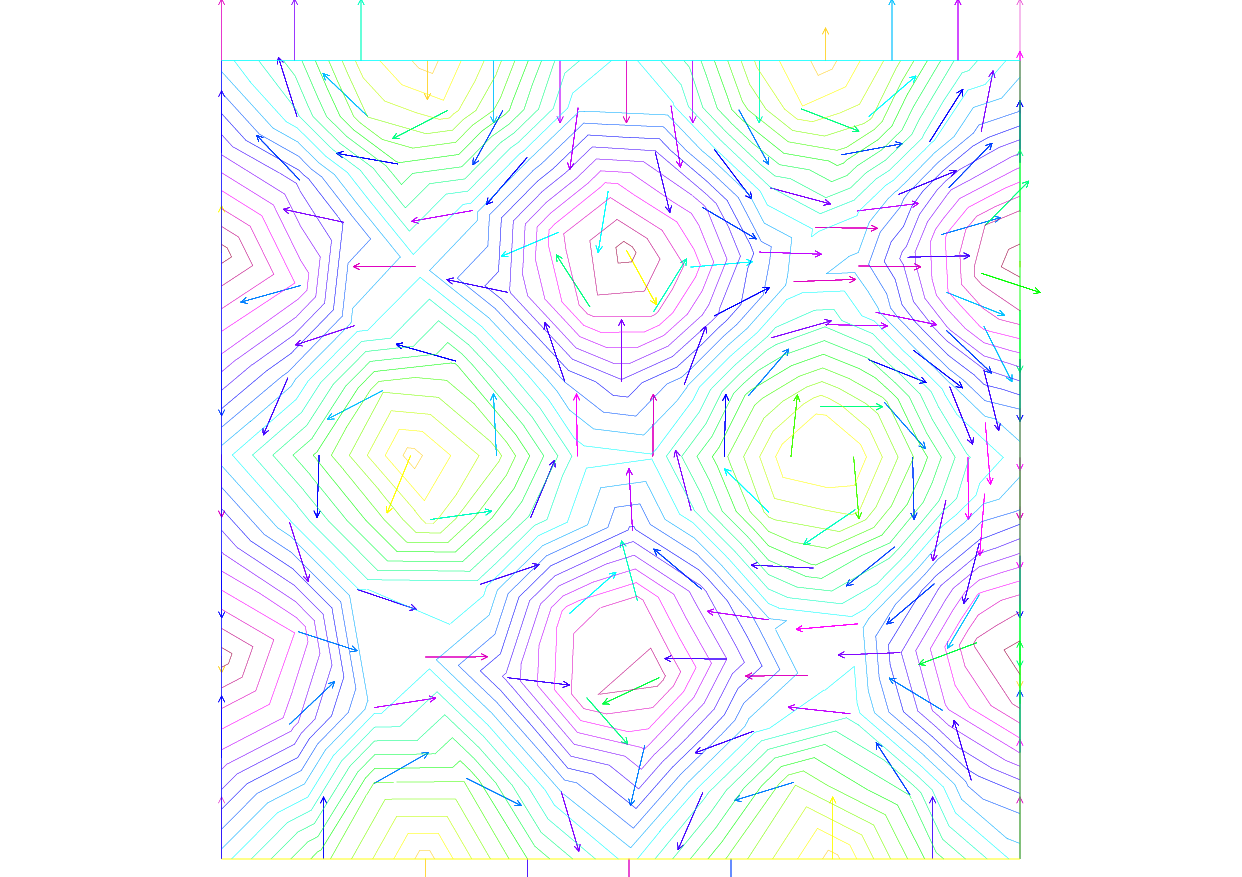}
 \captionof{figure}{True solution}
\label{fig:ts}
\end{minipage}
\caption*{Numerical solutions of formulation \eqref{eq:Hdiv-vorticity-stabilization} with different stabilization strategies on refinement level 1 when $\nu=10^{-6}$}
\end{figure}

Figures \ref{fig:vs}---\ref{fig:upw+vs} present the numerical solutions of formulation \eqref{eq:Hdiv-vorticity-stabilization} implemented with various stabilization strategies at refinement level 1. Figure \ref{fig:vs} presents the numerical solutions $\b{u}_h$ and $p_h$ obtained using only vorticity stabilization, where the term $\mathcal{C}_h(\mathbf{u}_h,\mathbf{v}_h)$ follows the definition in \eqref{eq:convection-central-flux}.  
Figure \ref{fig:upw+vs} presents the results obtained through the combined upwind and vorticity stabilization approach.

A detailed comparison with the reference solution (Figure \ref{fig:ts}) yields two key observations. First, the vorticity-only stabilization approach exhibits clear numerical instabilities, a phenomenon previously reported in the literature for nonconforming finite element methods applied to convection-dominated problems. As documented in \cite{MR1485996}, such instabilities typically require additional stabilization  strategies beyond standard SUPG-type formulations, particularly through the introduction of jump penalty terms. Second,  combined upwind-vorticity stabilization schemes maintains numerical stability throughout the simulations, showing no signs of solution oscillations or divergence.

Tables \ref{table:alex-1}--\ref{table:alex-3} present a systematic investigation of numerical errors and convergence rates for velocity and pressure fields under various norms. Tables \ref{table:alex-1} and \ref{table:alex-3} document the results obtained using Stenberg2 elements with combined upwind and vorticity stabilization for $\nu=10^{-6}$ and $\nu=1$ cases. 
These comprehensive numerical experiments provide strong validation of the theoretical result established in Corollary \ref{the:u-uh}.  

\begin{table}[htbp]
\caption{Errors and convergence rates for the stabilized formulation   \eqref{eq:Hdiv-vorticity-stabilization} using  combined upwind and vorticity stabilization when $\nu=10^{-6}$   }
\footnotesize
\begin{tabular*}{\textwidth}{@{\extracolsep{\fill}} c |ccccc }
\cline{1-6}{}
\diagbox{}{ Mesh }  & level 1 &level 2 & level 3 &level 4 &level 5\\
\cline{1-6}
$\vvvert  \b{u}-\b{u}_h\vvvert$    &1.86E-2  &4.79E-3  &1.01E-3 &2.28E-4  &4.73E-5 \\
 order    &1.95  &2.48  &2.24 &2.96  &--- \\ 
 $\Vert  \b{u}-\b{u}_h\Vert_0$  & 8.95E-3  &2.14E-3 &3.54E-4 &6.72E-5  &1.01E-5 \\
order    &2.05  &2.87  &2.50&3.57 &---\\
$\Vert \nabla \cdot  \b{u}_h \Vert_0$ &8.62E-4 &4.02E-5  &2.68E-6 &7.39E-7 &2.13E-8 \\
order   &4.39 &4.33  &1.93 &6.69&---\\
$\Vert \b{u}-\b{u}_h \Vert_{0,\infty}$  & 3.42E-2&1.82E-2   &2.59E-3&9.34E-4 &1.36E-4\\
order  & 0.90 & 3.11  & 1.53&3.63 &---\\
$\Vert p-p_h \Vert_0$ & 1.18E-2& 2.99E-3  &7.03E-4 &1.69E-4 &4.11E-5\\
order &1.96 &2.31   &2.14 &2.67&---\\
\cline{1-6}
\end{tabular*}
\label{table:alex-1}
\end{table}

%

\begin{table}[htbp]
\caption{Errors and convergence rates for the stabilized formulation   \eqref{eq:Hdiv-vorticity-stabilization} using  combined upwind and vorticity stabilization when $\nu=1$   }
\footnotesize
\begin{tabular*}{\textwidth}{@{\extracolsep{\fill}} c |ccccc }
\cline{1-6}{}
\diagbox{}{ Mesh }   & level 1 &level 2 & level 3 &level 4 &level 5\\
\cline{1-6}
$\vvvert  \b{u}-\b{u}_h\vvvert$      &5.54E+0&4.96E-1  &1.90E-2  &4.93E-3 &1.23E-3   \\
 order      &3.46  &5.21 &2.02 &2.61  &--- \\  
 $\Vert  \b{u}-\b{u}_h\Vert_0$    &3.40E-1& 2.40E-2  &9.76E-5 &8.16E-6 &9.94E-7   \\
order      &3.79  &8.79  &3.73&3.97 &---\\
$\Vert \nabla \cdot  \b{u}_h \Vert_0$     &3.31E-3 &1.23E-4  &2.03E-5 &1.68E-6 &1.55E-7 \\
order     &4.72 &2.88  &3.74 &4.49&---\\
$\Vert \b{u}-\b{u}_h \Vert_{0,\infty}$    & 8.45E-1&4.74E-2   &5.74E-4&3.75E-5 &5.66E-6\\
order    & 4.13 & 7.05  & 4.10&3.56 &---\\
$\Vert p-p_h \Vert_0$     & 1.86E+1& 1.95E+1  &6.79E-2 &1.82E-2 &4.56E-3\\
order  &-0.7 &9.04   &1.98 &2.60&---\\
\cline{1-6}
\end{tabular*}
\label{table:alex-3}
\end{table}


To compare the computational efficiency between Stenberg2 and  the BDM  finite element of order 2 (BDM2) elements with upwind stabilization, we tabulate both the DOFs and computation time in Tables \ref{tab:Stenberg-ndof} and \ref{tab:BDM2-ndof}. The data reveals that Stenberg2 maintains approximately 20\% fewer DOFs than BDM2 across all refinement levels. This reduced dimensionality translates to progressively greater computational time savings for Stenberg2 as the problem size increases, with the efficiency advantage becoming more pronounced at higher DOF counts.

\begin{table}[htbp]
\begin{minipage}[b]{0.48\linewidth}
  \centering
  \caption{Stenberg2}
  \label{tab:Stenberg-ndof}
  \begin{tabular}{c|ccc|c}
    \toprule
    \textbf{level} & \textbf{ndof $\mathbf{u}_h$} & \textbf{ndof $p_h$} & \textbf{total ndof} & \textbf{CPU time}\\
    \midrule
    1 & 1575 & 822   & 2397   &0.23\\
    2 & 6250 & 3336   & 9586   &0.97\\
    3 & 24752 & 13356 & 38108  &4.257\\
    4 & 98254 & 53304 & 151558 &20.302\\
    5 & 394264 & 214476 & 608740 &108.565\\
    \bottomrule
  \end{tabular}
\end{minipage}

\begin{minipage}[b]{0.48\linewidth}
  \centering
  \caption{BDM2}
  \label{tab:BDM2-ndof}
  \begin{tabular}{c|ccc|c}
    \toprule
    \textbf{level} & \textbf{ndof $\mathbf{u}_h$} & \textbf{ndof $p_h$} & \textbf{total ndof} & \textbf{CPU time}\\
    \midrule
    1 & 2121 & 822   & 2943   &0.2571\\
    2 & 8472& 3336   & 11808   &1.118\\
    3 & 33654 & 13356 & 47010  &5.073\\
    4 & 133788 & 53304 &187092 &25.561\\
    5 & 537246 & 214476 & 751722 &163.825\\
    \bottomrule
  \end{tabular}
\end{minipage}

%
\end{table}

\appendix

\section{Proof of Lemma \ref{lem:stability-stenberg} }
To prove 
\begin{equation}\label{eq:Stenberg-stability}
\inf\limits_{q_h\in Q_h}\sup\limits_{\b{v}_h\in V_h}\frac{b( \b{v}_h,q_h)}{\Vert \b{v}_h \Vert_{1,h} \Vert q_h \Vert_{0}}=\beta_{is}^h>0,
\end{equation}
 we will 
  employ the macro-element analysis technique introduced in \cite{MR725982}. First, we introduce some notations. 
  Let $M=K_1\cup K_2$ be an arbitrary macroelement consisting of two adjacent elements $K_1$ and $K_2$ sharing a common edge (in 2D) or face (in 3D). The space  $Q_h$ admits the orthogonal decomposition: 
$$
Q_h=\bigoplus_M Q_h(M) \oplus \bar{Q}_h
$$
where
\begin{align*}
&Q_h(M)=\{q_h\in  L^2_0(\Omega):\;q_h|_{K_i}\in P_{k-1}(K_i),\; i=1,2;\;q_h=0\;\text{on $\Omega-M$} \},\\
&\bar{Q}_h=\left\{ q_h\in L^2_0(\Omega):\;q_h|_M\in P_0(M)\right\}.
\end{align*}
Define the local velocity space on $M$ as: 
$$
V_h(M)=\left\{
\begin{aligned}
& \b{v}\in H_0(\mr{div},M):\;\b{v}|_{K_i}\in \left( P_k(K_i) \right)^d,\;i=1,2;\\
&\text{all degrees of freedom on  $\partial M$ vanish}; \;\b{v}=\b{0}\;\text{on $\Omega-M$}
\end{aligned}
\right\}.
$$
Then, the  stability \eqref{eq:Stenberg-stability} can be established through verification of two key conditions: first,  the uniform inf-sup stability of the pair \( V_h(M)/Q_h(M) \) with respect to $M$, and second, the inf-sup stability of the pair \( V_h/\bar{Q}_h \).


\subsection{(a) Uniform inf-sup stability for \( V_h(M) / Q_h(M) \)}
Given the quasi-uniformity of $\mathcal{T}_h$, the finite element pair \( V_h(M)/Q_h(M) \) is uniformly inf-sup stable with respect to 
$M$  if and only if the following macroelement condition (cf.  \cite{MR725982}, \cite
[\S 3.5.4]{MR3561143}) holds :
\begin{equation}\label{eq:MC}
N_M := \left\{ q \in Q_h(M) : (\mathrm{div}\, \mathbf{v}, q)_M = 0 \quad \forall \mathbf{v} \in V_h(M) \right\} = \{0\}.
\end{equation}
Let \( q|_{K_i} = \sum_{|\beta|=0}^{k-1} c_\beta^i \mathbf{x}^\beta \). 
For each \( i = 1, 2 \) and multi-index \( \alpha \) with \( |\alpha| \geq 1 \), we construct \( \mathbf{v}_i^\alpha \in V_h(M) \) whose support is contained in \( K_i \) and which satisfies
\[
\int_{K_i} \mathbf{v}_i^\alpha \cdot \nabla \mathbf{x}^\alpha \mr{d}\b{x} = 1,
\]
as its sole non-zero degree of freedom; cf. \cite[(2.9), (2.10), (2.12)--(2.16)]{MR2594344}.
Then, integration by parts yields:  
\[
0 = (\mathrm{div}\, \mathbf{v}_i^\alpha, q)_M = -\sum_{|\beta|=0}^{k-1} c_\beta^i \int_{K_i} \mathbf{v}_i^\alpha \cdot \nabla \mathbf{x}^\beta \mr{d}\b{x} = -c_\alpha^i.
\]  
Thus, \( c_\alpha^i = 0 \) for \( |\alpha| \ge 1 \), implying \( q|_{K_i} = c_i \) (constant on each \( K_i \)).  Further, for any \( \mathbf{v} \in V_h(M) \),  
\[
0 = (\mathrm{div}\, \mathbf{v}, q)_M = (c_1 - c_2) \int_F \mathbf{v} \cdot \mathbf{n}_F \, \mathrm{d}s, \quad F = K_1 \cap K_2.
\]  
Select \( \mathbf{v} \in V_h(M) \) such that \( \int_F \mathbf{v} \cdot \mathbf{n}_F \, \mathrm{d}s = 1 \). This forces \( c_1 = c_2 \), so \( q|_M \) is globally constant. Since \( q \in L^2_0(M) \), we conclude \( q = 0 \).  Thus the macroelement condition \eqref{eq:MC} holds.

\subsection{(b) Inf-sup stability for \( V_h / \bar{Q}_h \) } 
The inf-sup stability of the pair \( V_h / \bar{Q}_h \) holds if and only if there exists a Fortin operator \( \pi : V \to V_h \) satisfying the following conditions:
\begin{equation}\label{eq:MT-2}
\begin{aligned}
&(\nabla \cdot (\pi \mathbf{v}), \bar{q}_h) = (\nabla \cdot \mathbf{v}, \bar{q}_h), && \forall \bar{q}_h \in \bar{Q}_h, \\
&\|\pi \mathbf{v}\|_{1,h} \leq C \|\mathbf{v}\|_1, && \forall \mathbf{v} \in V.
\end{aligned}
\end{equation}
To ensure the first condition in \eqref{eq:MT-2}, we impose the flux continuity requirement:
\begin{equation}\label{eq:ME-Fortin-C1}
\int_{T} (\b{v}-\pi \b{v})\cdot \b{n} \mr{d}s=0,
\end{equation}
     where \( T= M \cap M' \) denotes the interface between adjacent  macroelements \( M \) and \( M' \).  
For the  stability of the Fortin operator, we  prescribe the vertex-based averaging:
\begin{equation}\label{eq:ME-Fortin-C2}
     (\pi \mathbf{v})(\mathbf{x}_i) = \frac{1}{|S_i|} \int_{S_i} \mathbf{v},  
\end{equation}
     where \( \mathbf{x}_i \) are vertices of \( K \in \mathcal{T}_h \).  
All remaining degrees of freedom for $\pi\b{v}$ follow the conventional definition of   Stenberg elements \cite{MR2594344}, with the modification that the degree of freedom corresponding to (2.15) in \cite{MR2594344}  must be expressed in integral form. This completes the full specification of the operator \( \pi \mathbf{v} \), which satisfies the first equation in \eqref{eq:MT-2}. 
     
     The stability estimate  \(\Vert \pi v \Vert_{1,h} \le C \vert v \vert_1\) follows from the definition of the projection operator $\pi$ combined with the Poincar\'e inequality.

   We focus on the two-dimensional case. Let $\hat{K}$ be the reference element with vertices $\hat{\b{V}}_0, \hat{\b{V}}_1, \hat{\b{V}}_2$ and edges $\hat{e}_0, \hat{e}_1, \hat{e}_2$, where $\hat{\b{n}}^i$ denotes the unit outer normal vector to the edge $\hat{e}_i$. Consider a mesh element $K = \mathscr{F}(\hat{K})$ obtained through an affine mapping $\mathscr{F}$, with corresponding vertices $\b{V}_i = \mathscr{F}(\hat{\b{V}}_i)$ and edges $e_0, e_1, e_2$. The unit outer normal vector to $e_i \subset \partial K$ is denoted by $\b{n}^i$.

The dual basis $\hat{\b{\omega}}(\hat{\b{x}})$ is defined with respect to the degrees of freedom $\hat{\mathcal{D}}$ on the reference element  $\hat{K}$ as follows:
\begin{equation}\label{eq:DOF-hatK-Stenberg-2D}
\begin{aligned}
&\hat{\b{f}}(\hat{\b{V}}_i), \quad i=0,1,2, \\
&\int_{\hat{e}_i} \hat{\b{f}} \cdot \hat{\b{n}}^i \, \hat{\phi}_j^i \, \mr{d}\hat{s}, \quad i=0,1,2, \quad \hat{\phi}_j^i \in \mathbb{P}_{k-2}(\hat{e}_i), \\
&\int_{\hat{K}} \hat{\b{f}} \cdot \hat{\b{\psi}}_i \, \mr{d}\hat{\b{x}}, \quad \hat{\b{\psi}}_i \in \mathcal{N}_{k-2}(\hat{K}),
\end{aligned}
\end{equation}
where $\{\hat{\phi}_j^i(\hat{\b{x}})\}$ forms a basis for the polynomial space $\mathbb{P}_{k-2}(\hat{e}_i)$, and $\{\hat{\b{\psi}}_i(\hat{\b{x}})\}$ constitutes a basis for the  first
kind Nédélec space $\mathcal{N}_{k-2}(\hat{K})$, whose definition is referred to \cite[(2.3.37)]{Bof1Bre2For3:2013-Mixed}. Similarly, the dual basis $\b{\omega}(\b{x})$ on the physical element $K$ corresponds to the degrees of freedom $\mathcal{D}$:
\begin{equation}\label{eq:DOF-Stenberg-2D}
\begin{aligned}
&\b{f}(\b{V}_i), \quad i=0,1,2, \\
&\int_{e_i} \b{f} \cdot \b{n}^i \, \phi_j^i \, \mr{d}s, \quad i=0,1,2, \quad \phi_j^i \in \mathbb{P}_{k-2}(e_i), \\
&\int_K \b{f} \cdot \b{\psi}_i \, \mr{d}\b{x}, \quad \b{\psi}_i \in \mathcal{N}_{k-2}(K),
\end{aligned}
\end{equation}
with the basis functions transforming according to:
\begin{equation}\label{eq: bianhuan-I}
\begin{aligned}
\phi_j^i(\b{x}) &= \hat{\phi}_j^i(\hat{\b{x}}), \quad \text{where } \hat{\b{x}} = \mathscr{F}^{-1}(\b{x}), \\
\b{\psi}_i(\b{x}) &= \left(\frac{\partial \hat{\b{x}}}{\partial \b{x}}\right)^T \hat{\b{\psi}}_i(\hat{\b{x}}).
\end{aligned}
\end{equation}

 Assume
 \begin{equation}\label{eq:relation-K-hatK}
 \begin{aligned}
\b{\omega}(\b{x})=&\mr{det}\left(\frac{\partial \hat{\b{x}}}{\partial \b{x}} \right)\left(\frac{\partial \hat{\b{x}}}{\partial \b{x}} \right)^{-1}\hat{\b{s}}(\hat{\b{x}}),\\
\hat{\b{s}}(\hat{\b{x}})=&\hat{\underline{\b{\omega}}}(\hat{\b{x}})\cdot \b{y},
\end{aligned}
\end{equation}
where 
$$
\hat{\underline{\b{\omega}}}(\hat{\b{x}})=(\hat{\b{\omega}}^V_0(\hat{\b{x}}),\hat{\b{\omega}}^V_1(\hat{\b{x}}), \hat{\b{\omega}}^V_2(\hat{\b{x}}),  \hat{\b{\omega}}^e_{0,0}(\hat{\b{x}}), \ldots, \hat{\b{\omega}}^e_{2,k-2}(\hat{\b{x}}),\hat{\b{\omega}}^C_{0}(\hat{\b{x}}),\ldots)^T
$$ is the vector-valued function consisting of the dual basis functions  $\{ \hat{ \b{\omega} }_i (\hat{\b{x}})\}$  and $\b{y}$ is the coordinate vector of $\hat{\b{s}}(\hat{\b{x}})$.

From \eqref{eq: bianhuan-I} and \eqref{eq:relation-K-hatK}, we have
\begin{equation}\label{eq:Stenberg2-DOF-II}
\begin{aligned}
&\b{\omega}(\b{V}_i)=\mr{det}\left(\frac{\partial \hat{\b{x}}}{\partial \b{x}} \right)\left(\frac{\partial \hat{\b{x}}}{\partial \b{x}} \right)^{-1} \hat{\b{s}}(\hat{ \b{V}}_i )\quad \forall i,\\
&\int_{e_i}\b{\omega}\cdot \b{n}^i\; \phi_j^i \mr{d}s= \int_{\hat{e}_i}\hat{\b{s}}\cdot \hat{\b{n}}^i \; \hat{\phi}_j^i \mr{d}\hat{s} \quad \forall i,j,\\
&\int_K \b{\omega}\cdot  \b{\psi}_i\mr{d}\b{x}=
\int_{\hat{K}} \hat{ \b{s} }\cdot  \hat{ \b{\psi}}_i\mr{d}\hat{ \b{x} }
\quad \forall i.
\end{aligned}
\end{equation}     
Then 
we have
\begin{equation}\label{eq:Stenberg2-DOF-III}
\b{y}=B_K\b{b},
\end{equation}
where 
$$
B_K=\left(
\begin{matrix}
\mr{det}\left(\frac{\partial \b{x} } {\partial \hat{\b{x}}}\right)
\left(\frac{\partial \b{x}}{\partial \hat{\b{x}} } \right)^{-1}&\b{0}&\b{0}&\b{0}\\
\b{0}&\mr{det}\left(\frac{\partial \b{x} } {\partial \hat{\b{x}}}\right)
\left(\frac{\partial \b{x}}{\partial \hat{\b{x}} } \right)^{-1}&\b{0}&\b{0}\\
\b{0}&\b{0}&\mr{det}\left(\frac{\partial \b{x} } {\partial \hat{\b{x}}}\right)
\left(\frac{\partial \b{x}}{\partial \hat{\b{x}} } \right)^{-1}&\b{0}\\
\b{0}&\b{0}&\b{0}&\b{I}
\end{matrix}
\right),
$$ 
and
$$
\b{b}=\left( \b{\omega}(\b{V}_0),
 \b{\omega}(\b{V}_1),
 \b{\omega}(\b{V}_2),
\int_{e_0}\b{\omega}\cdot \b{n}^0\; \phi^0_0 \mr{d}s,
\ldots,
\int_{e_2}\b{\omega}\cdot \b{n}^2\; \phi^2_{k-2} \mr{d}s,
\int_K \b{\omega}\cdot  \b{\psi}_0\mr{d}\b{x},
\ldots
\right)^T.
$$

The operator $\pi$ is defined as follows:
\begin{equation}\label{eq:definition-pi}
\begin{aligned}
&\pi \b{v}= \sum_{i=0}^{2} \frac{1}{|S_i|}\int_{S_i}\b{v}(\b{x})\mr{d}\b{x}\;\b{\omega}^V_i(\b{x})\\
&+\sum_{i=0}^{2} \sum_{j=0}^{k-2}\int_{e_i} (\b{v}\cdot \b{n}^i) \phi_j^i \mr{d}s\; \b{\omega}^e_{i,j}(\b{x})+\sum_{i} \int_{K} \b{v}\cdot  \b{\psi}_i\mr{d}\b{x}\; \b{\omega}_{i}^C(\b{x}),
\end{aligned}
\end{equation}
where $S_i$ is the union of mesh elements containing the vertex $V_i$ and $\b{\omega}^V_i, \b{\omega}^e_{i,j},\b{\omega}_{i}^C$ represent the dual basis functions corresponding to the degrees of freedom specified in \eqref{eq:DOF-Stenberg-2D}.

Now we need to prove the following stability: for $m=0,1$
\begin{equation}\label{eq:I}
\Vert D^m \pi \b{v} \Vert_{0,K}
\le C\left( h^{-m} \Vert  \b{v} \Vert_{0,S(K)} +h^{1-m} \Vert  \nabla\b{v} \Vert_{0,S(K)} \right),
\end{equation}
where $S(K)$ is the set of the influence elements of $K$, i.e.,
$$
S(K)=\bigcup_{T\in\mathcal{I}(K)}T,\quad \mathcal{I}(K):=\{T\in\mathcal{T}_h:\;T\cap K\ne \emptyset\}.
$$
From Cauchy-Schwarz inequality, the continuous trace  inequality \eqref{eq:trace-inequality} and \eqref{eq: bianhuan-I}, we have
\begin{equation}\label{eq:I-1}
\begin{aligned}
&\left|  \frac{1}{|S_i|}\int_{S_i}\b{v}(\b{x})\mr{d}\b{x}\right|\le C h^{-1} \Vert \b{v} \Vert_{0,S(K)},\\
& \left| \int_{e_i} (\b{v}\cdot \underline{\b{n}}^i) \phi_j \mr{d}s\right|
 \le  \Vert   \b{v}  \Vert_{0,e_i} \Vert  \phi_j  \Vert_{0,e_i}\le C \left(   \Vert   \b{v}  \Vert_{0,K}
+h \Vert   \nabla \b{v}  \Vert_{0,K} \right),\\
&\left| \int_{K} \b{v}\cdot  \b{\psi}_i\mr{d}\b{x}\right| \le
\Vert \b{v} \Vert_{0,K} \Vert \b{\psi}_i \Vert_{0,K}
\le 
C\Vert   \b{v} \Vert_{0,K}.
\end{aligned}
\end{equation}
From \eqref{eq:Stenberg2-DOF-III} and  \eqref{eq:relation-K-hatK}, one has
\begin{equation}\label{eq:I-2}
\begin{aligned}
&\Vert D^m\b{\omega}^V_i(\b{x})\Vert_{0,K}\le C h^{1-m},\\
&\Vert D^m\b{\omega}^e_{i,j}(\b{x})\Vert_{0,K}+\Vert D^m\b{\omega}^C_i(\b{x})\Vert_{0,K}\le C h^{-m}.
\end{aligned}
\end{equation}
From \eqref{eq:definition-pi}, \eqref{eq:I-1} and  \eqref{eq:I-2}, we can easily obtain \eqref{eq:I}.

Define
$$
\b{c}_K=\frac{1}{|S(K)|}\int_{S(K)}\b{v}(\b{x})\mr{d}\b{x},
$$
and note 
$$
\int_{S(K)} (\b{v}-\b{c}_K)\mr{d}\b{x}=0.
$$
From \eqref{eq:I} and  the Poincar{\'e} inequality \cite[Lemma C.3]{MR3561143}, we have
\begin{equation}\label{eq:II}
\begin{aligned}
&\Vert \nabla \pi \b{v} \Vert_{0,K}
=\Vert \nabla (\pi (\b{v}-\b{c}_K) ) \Vert_{0,K}\\
 \le &C\left( h^{-1} \Vert  \b{v}-\b{c}_K \Vert_{0,S(K)} +  \Vert  \nabla (\b{v}-\b{c}_K) \Vert_{0,S(K)} \right)\\
  \le &C\Vert  \nabla \b{v} \Vert_{0,S(K)}.
\end{aligned}
\end{equation}
We now analyze the term $\sum_{F\in\mathcal{F}_h} h^{-1}_F \Vert [\pi \b{v}  ] \Vert^2_{0,F}$. The definition of the auxiliary set $S(F)$ depends on the face type: for interior faces $\mathcal{F}_h^i\ni F = K \cap K'$ between adjacent elements $K$ and $K'$, we set $S(F) = S(K) \cup S(K')$, while for boundary faces $F \in \mathcal{F}_h^\partial$, we define $S(F) = S(K)$. The mean value of $\b{v}$ over $S(F)$, denoted by $\b{c}_F$, is given by
$$
\b{c}_F=\frac{1}{|S(F)|}\int_{S(F)}\b{v}(\b{x})\mr{d}\b{x}.
$$
This definition immediately yields the zero-mean property:
$$
\int_{S(F)} (\b{v}-\b{c}_F)\mr{d}\b{x}=0.
$$
For any $F\in\mathcal{F}^i_h$, discrete trace inequality \cite[Lemma 1.46]{MR2882148}, \eqref{eq:I}  and the Poincar{\'e} inequality \cite[Lemma C.3]{MR3561143} yield 
\begin{equation}\label{eq:III}
\begin{aligned}
& h^{-1}_F \Vert [\pi \b{v}  ] \Vert^2_{0,F}
=h^{-1}_F \Vert [\pi \b{v}-\b{c}_F  ] \Vert^2_{0,F}
=h^{-1}_F \Vert [\pi (\b{v}-\b{c}_F)  ] \Vert^2_{0,F} \\
\le & Ch^{-2}  \left( \Vert  \pi (\b{v}-\b{c}_F)     \Vert^2_{0,K}+\Vert  \pi (\b{v}-\b{c}_F)    \Vert^2_{0,K'} \right)\\
\le &
C\left( h^{-2} \Vert  \b{v}-\b{c}_F \Vert_{0,S(K)}^2 +  \Vert  \nabla (\b{v}-\b{c}_F) \Vert_{0,S(K)}^2 \right)\\
&+C\left( h^{-2} \Vert  \b{v}-\b{c}_F \Vert_{0,S(K')}^2 +  \Vert  \nabla (\b{v}-\b{c}_F) \Vert_{0,S(K')}^2 \right)\\
\le &
C\left( h^{-2} \Vert  \b{v}-\b{c}_F \Vert_{0,S(F)}^2 +  \Vert  \nabla \b{v} \Vert_{0,S(F)}^2 \right)\\
\le &
C \Vert  \nabla \b{v} \Vert_{0,S(F)}^2.
\end{aligned}
\end{equation}
For any $F\in\mathcal{F}^{\partial}_h$, , the following estimate holds:
$$
|c_F|\le  C \Vert \nabla \b{v} \Vert_{0,S(F)}.
$$
This result follows immediately from the vanishing trace condition  $\b{v}|_F=\b{0}$ and  the Poincar{\'e} inequality \cite[Theorem A.36 \& Remark A.37]{MR3561143}. Furthermore, by combining this estimate with techniques similar to those used in establishing inequality \eqref{eq:III}, we can derive the following bound
\begin{equation}\label{eq:IV}
\begin{aligned}
h^{-1}_F \Vert [\pi \b{v}  ] \Vert^2_{0,F}
\le C \left(h^{-1}_F \Vert \pi (\b{v}-\b{c}_F)  \Vert^2_{0,F}+ |\b{c}_F|^2\right) 
\le
C \Vert  \nabla \b{v} \Vert_{0,S(F)}^2.
\end{aligned}
\end{equation}

Gathering the estimates from \eqref{eq:II}, \eqref{eq:III} and \eqref{eq:IV} and summing over all elements \( K \in \mathcal{T}_h \) and all facets $F\in\mathcal{F}_h$, we establish the stability inequality  in \eqref{eq:MT-2}.



\bibliographystyle{plain}








%
%

\end{document}